\theoremstyle{plain}
\newtheorem{thm}{Theorem}
\newtheorem{lem}[thm]{Lemma}
\newtheorem{prop}[thm]{Proposition}
\newtheorem{nota}[thm]{Notation}
\theoremstyle{remark}
\newtheorem{rem}[thm]{Remark}
\newcommand{\Y}{\mathbb{P}}
\newcommand{\Z}{\mathbb{Z}}
\newcommand{\R}{\mathbb{R}}
\newcommand{\N}{\mathbb{N}}
\newcommand{\bp}{\begin{pmatrix}}
\newcommand{\ep}{\end{pmatrix}}
\def\alphaltiset#1#2{\ensuremath{\left(\kern-.2em\left(\genfrac{}{}{0pt}{}{#1}{#2}\right)\kern-.2em\right)}}
\begin{document}

\title{Affine subspaces of matrices with  rank in a range}
\author{Elena Rubei}
\date{}
\maketitle

{\footnotesize\em Dipartimento di Matematica e Informatica ``U. Dini'', 
viale Morgagni 67/A,
50134  Firenze, Italia }

{\footnotesize\em
E-mail address: elena.rubei@unifi.it}

\def\thefootnote{}
\footnotetext{ \hspace*{-0.36cm}
{\bf 2010 Mathematical Subject Classification: } 15A30

{\bf Key words:} affine subspaces, matrices, rank, range}

\begin{abstract} 
The problem of finding the maximal dimension of  linear or affine subspaces of matrices whose rank is constant, or bounded below, or bounded above, has attracted many mathematicians from the sixties to the present day. The problem has caught also the attention of algebraic geometers since  vector spaces of matrices of constant rank $r$ give rise to  vector bundle maps whose images are vector bundles of rank $r$.
Moreover there is a link with the so called ``rank metric codes'', since 
 a constant rank $r$ subspace of $K^{n \times n}$ can be viewed as a constant weight $r$ rank metric code;
  it can be interesting to study also the maximal dimension  of the subspaces of $K^{n \times n}$  whose elements have rank in a range $[s,r]$, since such subspaces obviously  give rank metric codes with weights in $[s,r]$.
In this paper, with the main purpose to get an organic result including the ones on spaces of matrices with constant rank, the ones on spaces of matrices with rank bounded below and the ones on spaces of matrices with rank bounded above and to generalize a previous result on real matrices with constant rank to matrices on a more general field, we study the maximal dimension of affine subspaces of matrices whose rank is between 
two numbers under mild assumptions on the field. We get also a result on antisymmetric matrices and on matrices in row echelon form.
\end{abstract}

\section{Introduction}

For every $m,n \in \mathbb{N}$ and every field $K$, let  $K^{m \times n}$  be the vector space of the $(m \times n)$-matrices over $K$, let $K_s^{n \times n}$ be the vector space of the symmetric $(n \times n)$-matrices over  $K$ and, finally, let $K_a^{n \times n}$ be the vector space of the antisymmetric $(n \times n)$-matrices over  $K$.%, and, finally, let $T(n,K)$ be the vector space of the upper triangular  $(n \times n)$-matrices over  $K$.

We say that an affine subspace $S$ of $K^{m \times n}$ has constant rank $r$ if every matrix of $S$ has rank $r$ and we say that a linear subspace $S$ 
 of $K^{m \times n}$  has constant rank $r$ if every nonzero matrix of $S$ has rank $r$.

The problem of finding the maximal dimension of  linear or affine subspaces of matrices whose rank is constant or bounded below or bounded above has been studied from the sixties to the present day.  It has some interest in algebraic geometry since a $(l+1)$-vector space of $(m \times n )$-matrices of constant rank $r$ gives rise to a vector bundle map ${\cal O}_{\Y^l}^n (-1) \rightarrow {\cal O}_{\Y^l}^m  $ whose image is a vector bundle of rank $r$; furthermore there is a connection with dual varieties; the reader interested in the link between spaces of matrices with constant rank and algebraic geometry can read for instance the paper \cite{I-L}.
Moreover there is a link with the so called ``rank metric codes'', introduced by Delsarte in \cite{Del}, which are subsets of $K^{n \times n}$ together with the weight function given by the metric $d$ defined by  $d(A,B):= rk(A-B)$ for any $A,B \in K^{n \times n}$;
 a constant rank $r$ subspace of $K^{n \times n}$ can be viewed as a constant weight $r$ rank metric code, and it can be interesting to study also the maximal dimension  of a subspace of $K^{n \times n}$  whose elements have rank in a range $[s,r]$, since they obviously  give rank metric codes with weights in $[s,r]$.

To quote some of the main results on the maximal dimension of affine or linear subspaces  of matrices with constant or bounded rank, we need some notation.
Define
$${\cal A}^K(m \times n; r)=  \{ S \;| \; S \; \mbox{\rm  affine subspace of  $K^{m \times n}$ 
 of constant rank }  r\}$$
$${\cal A}_{sym}^K(n;r)=  \{ S \;| \; S \; \mbox{\rm  affine subspace of $K_s^{n \times n}$  of constant rank }  r\}$$
$${\cal A}_{ant}^K(n;r)=  \{ S \;| \; S \; \mbox{\rm  affine subspace of $K_a^{n \times n}$  of constant rank }  r\}$$
$${\cal A}^K(m \times n;s,r)= 
  \left\{ \begin{array}{ll} 
 S \;|  & \; S \; \mbox{\rm  affine subspace of $K^{m \times n}$  such that } \\
 & s =\min  \{rk(A)| \;  A \in S  \}, \;  
r =\max  \{rk(A)| \;  A \in S \}
\end{array}   \right\}$$
 $${\cal A}_{ant}^K(n;s,r)=  \left\{ \begin{array}{ll} 
 S \;|  & \; S \; \mbox{\rm  affine subspace of $K_a^{n \times n}$  such that } \\
 & s =\min  \{rk(A)| \;  A \in S  \}, \;  
r =\max  \{rk(A)| \;  A \in S \}
\end{array}   \right\}
$$
 $${\cal A}_{ech}^K(m \times n;s,r)=  \left\{ \begin{array}{ll} 
 S \;|  & \; S \; \mbox{\rm  affine subspace of $K^{m \times n}$  such that } \\
&   A  \; \mbox{\rm is in row echelon form } \forall A \in S,\\
 & s =\min  \{rk(A)| \;  A \in S  \}, \;  
r =\max  \{rk(A)| \;  A \in S \}
\end{array}   \right\} $$ 
% $${\cal A}_{triang}^K(n;s,r)=  \left\{ \begin{array}{ll} 
% S \;|  & \; S \; \mbox{\rm  affine subspace of $T(n,K)$  such that } \\
% & s =\min  \{rank(M)| \;  M \in S  \}, \;  
%r =\max  \{rank(M)| \;  M \in S \}
%\end{array}   \right\} $$ 
$${\cal L}^K (m \times n;r) =  \{ S \;| \; S \; \mbox{\rm  linear subspace of $K^{m \times n}$   of constant rank }  r\}$$
$${\cal L}_{sym}^K (n; r) =  \{ S \;| \; S \; \mbox{\rm  linear subspace of $K_s^{n \times n}$  of constant rank }  r\},$$
where $rk$ denotes obviously the rank.
Let 
$$a^{\cdot}_{\cdot}(\cdot; \cdot) = \max \{\dim S \mid S \in {\cal A}^{\cdot}_{\cdot}(\cdot; \cdot) ) \}$$
and
$$l^{\cdot}_{\cdot}(\cdot; \cdot) = \max \{\dim S \mid S \in {\cal L}^{\cdot}_{\cdot}(\cdot; \cdot) ) \},$$
for example
$$a_{ant}^K(n;r) = \max \{\dim S \mid S \in {\cal A}_{ant}^K(n,r)  \}.$$

From the wide literature on the maximal dimension of linear subspaces of matrices with constant rank, 
 we quote in particular the following theorems :

\begin{thm}  [Westwick, \cite{W1}] For $2 \leq r \leq m \leq n$, we have:
$$ n-r+1 \leq  l^{\mathbb{C}}(m \times n;r) \leq  m+ n -2 r+1$$
\end{thm}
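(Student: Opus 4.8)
\emph{Proof proposal.}

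The plan is to establish the two inequalities separately: the left-hand one by an explicit construction, the right-hand one by the characteristic-class argument sketched in the Introduction, which is where the hypothesis $K=\C$ is genuinely used.

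For the lower bound I would produce a linear subspace $S\subseteq\C^{m\times n}$ of dimension $n-r+1$ whose nonzero elements all have rank $r$. The natural candidate is the space of banded Toeplitz (shift) matrices padded with zero rows: to $(x_0,\dots,x_{n-r})\in\C^{n-r+1}$ associate the $(m\times n)$-matrix whose $i$-th row, for $1\le i\le r$, carries $x_0,\dots,x_{n-r}$ in columns $i,\dots,i+n-r$ and is zero elsewhere, the last $m-r$ rows being zero. This assignment is linear and injective, so its image has dimension $n-r+1$. For a nonzero vector the first $r$ rows are linearly independent: a vanishing combination $\sum_i \lambda_i(\text{row }i)=0$ translates into the identity $(\sum_i\lambda_i t^{i-1})(x_0+x_1t+\dots+x_{n-r}t^{n-r})=0$ in $\C[t]$, which forces all $\lambda_i=0$ since $\C[t]$ is a domain and the second factor is nonzero. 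Hence every nonzero matrix has rank exactly $r$, giving $n-r+1\le l^{\C}(m\times n;r)$.

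For the upper bound let $S$ be of constant rank $r$ with $\dim S=l+1$; I must show $l\le m+n-2r$. Projectivizing, over $\Y^l=\Y(S)$ the tautological family defines a bundle map $\phi:\mathcal{O}^n_{\Y^l}(-1)\to\mathcal{O}^m_{\Y^l}$ of constant rank $r$, so that $\mathcal{E}=\operatorname{im}\phi$, $\mathcal{K}=\ker\phi$ and $\mathcal{Q}=\operatorname{coker}\phi$ are vector bundles of ranks $r$, $n-r$ and $m-r$, sitting in the exact sequences
$$0\to\mathcal{K}\to\mathcal{O}^n_{\Y^l}(-1)\to\mathcal{E}\to0,\qquad 0\to\mathcal{E}\to\mathcal{O}^m_{\Y^l}\to\mathcal{Q}\to0.$$
Working in $H^*(\Y^l;\Z)=\Z[h]/(h^{l+1})$ with $h$ the hyperplane class and $c(\mathcal{O}(-1))=1-h$, the Whitney formula yields $c(\mathcal{K})c(\mathcal{E})=(1-h)^n$ and $c(\mathcal{E})c(\mathcal{Q})=1$, hence $c(\mathcal{K})=(1-h)^nc(\mathcal{Q})$.

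The decisive input, available precisely because $K=\C$, is positivity: $\mathcal{Q}$ is a quotient of $\mathcal{O}^m_{\Y^l}$ and, dualizing the first sequence, $\mathcal{K}^{*}$ is a quotient of $\mathcal{O}^n_{\Y^l}(1)$, so both are globally generated; being globally generated on $\Y^l$, each has Chern classes that are non-negative multiples of the effective classes $h^i$. Writing $q_i\ge0$ for the coefficient of $h^i$ in $c(\mathcal{Q})$ (with $q_i=0$ for $i>m-r$) and substituting $h\mapsto-h$ in the relation above, one gets $c(\mathcal{K}^{*})=(1+h)^nc(\mathcal{Q}^{*})$, i.e. the coefficient of $h^j$ in $c(\mathcal{K}^{*})$ equals $\sum_{b}\binom{n}{j-b}(-1)^bq_b$; this must be non-negative for all $j$ and must vanish for $j>n-r$. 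I expect the genuine obstacle to be the final step: showing that this system of sign-and-vanishing constraints, driven by the positivity of the $q_b$, is inconsistent as soon as $l>m+n-2r$, so that necessarily $\dim S=l+1\le m+n-2r+1$. This is the only point at which both the positivity (hence the ground field $\C$) and the precise ranks $n-r$, $m-r$ enter, and it is where Westwick's argument does its real work.
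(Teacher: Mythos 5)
Note first that the paper offers no proof of this statement: it is quoted as a known result of Westwick \cite{W1}, so your proposal has to be judged on its own merits rather than against an internal argument. Your lower bound is correct and complete: identifying the $i$-th row of your matrix with $t^{i-1}p(t)$, where $p(t)=x_0+x_1t+\cdots+x_{n-r}t^{n-r}$, the injectivity of multiplication by a nonzero $p$ in the domain $\C[t]$ shows every nonzero matrix in the Toeplitz family has rank exactly $r$, and the family is an $(n-r+1)$-dimensional linear space (this half in fact works over any field).

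The upper bound, however, contains a genuine gap, and you flag it yourself. The setup is fine and is indeed the frame of Westwick's topological proof: the bundles $\mathcal{K}$, $\mathcal{E}$, $\mathcal{Q}$ of ranks $n-r$, $r$, $m-r$, the Whitney relations $c(\mathcal{K})c(\mathcal{E})=(1-h)^n$ and $c(\mathcal{E})c(\mathcal{Q})=1$ in $\Z[h]/(h^{l+1})$, and the positivity of the Chern coefficients of the globally generated bundles $\mathcal{Q}$ and $\mathcal{K}^{*}$ are all correctly derived. But at the decisive moment you write that you ``expect'' the resulting sign-and-vanishing constraints to be inconsistent whenever $l>m+n-2r$; that expectation \emph{is} the theorem, and nothing in the proposal establishes it. Concretely, what you have actually written down yields only a much weaker bound: the polynomial $(1-h)^n c(\mathcal{Q})$ is nonzero of degree $n+\deg c(\mathcal{Q})\le n+m-r$, while $c(\mathcal{K})$ has degree at most $n-r$, so the truncated identity $c(\mathcal{K})\equiv(1-h)^n c(\mathcal{Q}) \pmod{h^{l+1}}$ is contradicted by degree counting alone only once $l\ge n+m-r$, giving $\dim S\le m+n-r$, which misses the claimed bound $m+n-2r+1$ by $r-1$. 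Extracting the stronger conclusion $l\le m+n-2r$ from the same identity --- showing that a truncation window of length $m+n-2r+2$ already cannot accommodate nonnegative coefficients $q_b$ on one side and alternating signs together with vanishing above degree $n-r$ on the other (note that integrality of the $q_b$, not just their positivity, is also available and may be needed) --- is the combinatorial heart of Westwick's argument, and it is entirely missing from the proposal.
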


\begin{thm}  [Ilic-Landsberg, \cite{I-L}] If $r$ is even and greater than or equal to $2$, then
$$l_{sym}^{\mathbb{C}}(n;r) =n-r +1$$
\end{thm}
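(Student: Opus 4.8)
The plan is to prove the two inequalities $l_{sym}^{\mathbb{C}}(n;r)\ge n-r+1$ and $l_{sym}^{\mathbb{C}}(n;r)\le n-r+1$ by translating a constant rank subspace into a morphism of vector bundles on a projective space and exploiting the self-duality coming from symmetry. Throughout I set $e=n-r$ and, since $r$ is even, $s=r/2$. For the lower bound I would exhibit an explicit $(e+1)$-dimensional space. Work on $\mathbb{P}^{e}$ with hyperplane class $h$ and consider the rank $r$ bundle $N=\mathcal{O}(1)^{s}\oplus\mathcal{O}^{s}$, equipped with the hyperbolic $\mathcal{O}(1)$-valued symmetric form pairing the two summands; this form is nondegenerate and $c_1(N)=sh=\tfrac{r}{2}h$, where evenness of $r$ is used. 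A general morphism $\mathcal{O}^{\,s+e}\to\mathcal{O}(1)^{s}$ on $\mathbb{P}^{e}$ is surjective, since the locus where an $s\times(s+e)$ matrix of linear forms drops rank has expected codimension $e+1>e$ and hence is empty; adding the identity on the $\mathcal{O}^{s}$ summand yields a surjection $\mathcal{O}^{n}=\mathcal{O}^{2s+e}\twoheadrightarrow N$ whose kernel $K$ is a subbundle of rank $e$. Composing
$$\psi:\ \mathcal{O}^{n}\twoheadrightarrow N\ \xrightarrow{\ \sim\ }\ N^{\vee}(1)\ \hookrightarrow\ (\mathcal{O}^{n})^{\vee}(1)=\mathcal{O}(1)^{n},$$
with the form as middle isomorphism, gives a symmetric morphism of constant rank $r$ whose entries are linear forms in the coordinates $x_0,\dots,x_e$; writing $\psi=\sum_k x_kA_k$ with $A_k\in\mathbb{C}_s^{n\times n}$, the span $\langle A_0,\dots,A_e\rangle$ is the desired $(n-r+1)$-dimensional constant rank $r$ subspace.

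For the upper bound, let $S\subseteq\mathbb{C}_s^{n\times n}$ be linear of constant rank $r$ with $\dim S=l+1$, and reverse the translation: over $\mathbb{P}^{l}=\mathbb{P}(S)$ the tautological family of symmetric matrices defines a morphism $\psi:\mathcal{O}^{n}\to\mathcal{O}(1)^{n}$ of constant rank $r$, so its kernel $K$ (rank $e$, a subbundle of $\mathcal{O}^{n}$), image $N$ (rank $r$) and cokernel $Q$ (rank $e$) are all vector bundles, fitting into
$$0\to K\to\mathcal{O}^{n}\to N\to 0,\qquad 0\to N\to\mathcal{O}(1)^{n}\to Q\to 0.$$
The symmetry $\psi=\psi^{\vee}(1)$ identifies $Q\cong K^{\vee}(1)$ and endows the image with a nondegenerate $\mathcal{O}(1)$-valued symmetric form, i.e. $N\cong N^{\vee}(1)$; taking first Chern classes forces $2c_1(N)=rh$, which is consistent precisely because $r$ is even (for odd $r$ this already obstructs the configuration). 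The two sequences give $c(N)=c(K)^{-1}$ and $c(Q)=(1+h)^{n}c(K)$, while $Q\cong K^{\vee}(1)$ yields $c(Q)=\sum_{i=0}^{e}c_i(K^{\vee})(1+h)^{e-i}$, whence the single relation
$$(1+h)^{n}\,c(K)=\sum_{i=0}^{e}c_i(K^{\vee})\,(1+h)^{e-i}\qquad\text{in }\mathbb{Z}[h]/(h^{\,l+1}).$$
Since $K^{\vee}$ is globally generated (it is a quotient of $\mathcal{O}^{n}$, by dualizing the first sequence), its Chern classes $c_i(K^{\vee})$ are nonnegative integers.

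The main obstacle is this final step: extracting the sharp bound $l\le e=n-r$ from the Chern relation. The right-hand side is a polynomial of degree $e$ in $h$, so every coefficient of $h^{j}$ with $j>e$ on the left must vanish modulo $h^{l+1}$; assuming $l\ge e+1$, the degree $e+1$ coefficient gives
$$\sum_{i=0}^{e}(-1)^{i}c_i(K^{\vee})\binom{n}{\,e+1-i\,}=0,$$
and higher degrees (available when $l$ grows) give further relations. I expect the crux to be showing that this system, together with the positivity $c_i(K^{\vee})\ge 0$ and the self-dual normalization $c_1(N)=\tfrac{r}{2}h$, becomes inconsistent as soon as $l>e$, thereby forcing $\dim S\le n-r+1$. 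A useful reduction along the way is that the directions of $S$ fixing a common kernel form a linear space of nonsingular symmetric $r\times r$ matrices, which has dimension $1$ because the determinant is a nonconstant form of degree $r\ge 2$ on any projectivized such space over the algebraically closed field $\mathbb{C}$ and therefore must vanish somewhere if the space has dimension $\ge 2$. It is worth stressing that the naive dual-variety/degeneracy-locus count — where the symmetric rank $\le r-1$ locus has codimension $\binom{n-r+2}{2}$ — only yields a weaker Westwick-type bound, so the passage to the exact value $n-r+1$ genuinely relies on the self-dual refinement $N\cong N^{\vee}(1)$; controlling the interplay between the nonnegativity of the Chern classes of the globally generated bundle $K^{\vee}$ and the self-dual twist is the heart of the proof.
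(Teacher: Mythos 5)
The paper itself does not prove this theorem; it quotes it from Ilic--Landsberg \cite{I-L}, so your attempt can only be measured against the original bundle-theoretic argument, whose setup you reproduce faithfully: your lower-bound construction (the hyperbolic bundle $N=\mathcal{O}(1)^{s}\oplus\mathcal{O}^{s}$ on $\mathbb{P}^{e}$, the surjection $\mathcal{O}^{n}\twoheadrightarrow N$, and the symmetric composite $\psi$ of constant rank $r$ with linearly independent coefficient matrices) is correct and essentially the standard one, and your translation of the upper bound into the two exact sequences with $Q\cong K^{\vee}(1)$ and $N\cong N^{\vee}(1)$ is also the right frame. The problem is the step you yourself flag as the ``main obstacle'': you never derive $l\le e$, you only express the expectation that the Chern relations plus nonnegativity of $c_i(K^{\vee})$ become inconsistent for $l>e$. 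That is the entire content of the theorem's hard direction, so as written the proposal is a correct setup plus a conjecture, not a proof.

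Worse, the gap is not mere unfinished bookkeeping: the plan as stated provably cannot close it. Take $e=1$, $n$ odd, $r=n-1$ (even), $l=e+1=2$, and a candidate rank-one kernel bundle with $c_1(K^{\vee})=\tfrac{r}{2}=\tfrac{n-1}{2}$, a nonnegative integer. Then the degree-one coefficient of your relation reads $c_1(K^{\vee})+1=n-c_1(K^{\vee})$, which holds, and your degree-$(e+1)$ relation reads $\binom{n}{2}=c_1(K^{\vee})\,n$, which also holds; so all constraints available in $\mathbb{Z}[h]/(h^{l+1})$ for $l=e+1$, together with positivity and the self-dual normalization $c_1(N)=\tfrac{r}{2}h$, are simultaneously satisfiable even though the theorem forbids $l=e+1$. (Only at $l\ge e+2$ does the $h^{3}$ coefficient, combined with $c_2(K)=0$ for a rank-one $K$, produce a contradiction.) Hence numerical Chern-class data alone cannot yield the sharp bound; Ilic and Landsberg need the finer geometric structure of the twisted quadratic form on $N$ (equivalently, the actual bundle $N$ with $N\cong N^{\vee}(1)$, not just its Chern classes) to exclude the borderline case. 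Your side remark about spaces of nonsingular symmetric matrices being at most one-dimensional is correct over $\mathbb{C}$ but does not substitute for this missing argument.
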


In case $r$ odd, the following result holds, see \cite{I-L}, \cite{G}, \cite{H-P}:

\begin{thm}  If $r$ is odd, then
$$l_{sym}^{\mathbb{C}}(n;r) =1$$
\end{thm}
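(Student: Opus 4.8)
The plan is to prove the two inequalities separately, with all the real content in the upper bound. The lower bound $l_{sym}^{\mathbb{C}}(n;r)\ge 1$ I would dispatch at once: choosing any symmetric matrix $A_0$ of rank $r$ and taking the line $\mathbb{C}\,A_0$ gives a linear subspace all of whose nonzero elements $\lambda A_0$ have rank $r$. For the upper bound I would argue by contradiction and in fact reduce to ruling out a pencil: supposing some linear subspace $S\subseteq\mathbb{C}_s^{n\times n}$ of constant rank $r$ had $\dim S\ge 2$, I would fix two linearly independent $A,B\in S$, so that every nonzero $sA+tB$ is symmetric of rank exactly $r$; thus the rank stays constantly equal to $r$ along the pencil parametrized by $[s:t]\in\mathbb{P}^1$.

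The main step would be a parity obstruction of Chern-class type. Reading the pencil as a bundle map over $\mathbb{P}^1$, the entries of $sA+tB$ are linear forms in $(s,t)$, so it defines a symmetric morphism $M\colon V\otimes\mathcal{O}_{\mathbb{P}^1}\to V^{\ast}\otimes\mathcal{O}_{\mathbb{P}^1}(1)$ with $V=\mathbb{C}^n$. Because the rank is constantly $r$, $\ker M$ would be a subbundle of rank $n-r$, and the quotient $Q:=(V\otimes\mathcal{O}_{\mathbb{P}^1})/\ker M$ a rank-$r$ bundle carrying a nondegenerate symmetric form $\bar M\colon Q\xrightarrow{\ \sim\ }Q^{\ast}\otimes\mathcal{O}_{\mathbb{P}^1}(1)$. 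Taking top exterior powers would give $(\det Q)^{\otimes 2}\cong\mathcal{O}_{\mathbb{P}^1}(r)$; since $\operatorname{Pic}(\mathbb{P}^1)=\mathbb{Z}$, writing $\det Q=\mathcal{O}_{\mathbb{P}^1}(d)$ forces $2d=r$, so $r$ would have to be even, contradicting the hypothesis. This contradiction yields $l_{sym}^{\mathbb{C}}(n;r)\le1$.

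The hard part, and the step I would check most carefully, is the passage from ``constant rank'' to genuine vector bundles together with the bookkeeping of the twist. That constant rank forces $\ker M$ (equivalently the image) to be a subbundle, not merely a coherent subsheaf, is precisely where the hypothesis bites and is the source of the topological rigidity; and the fact that the self-dual twist is exactly $\mathcal{O}_{\mathbb{P}^1}(1)$ (each entry being a single linear form) is what puts the odd integer $r$ on the right and produces the parity obstruction, so I would verify the symmetry of $\bar M$ and the determinant relation with care to avoid degree or sign slips. As a sanity check I would note the elementary special case $r=n$: there $\det(sA+tB)$ is a homogeneous polynomial of degree $n\ge1$ on $\mathbb{P}^1$ that would have to be nonvanishing, which is impossible over $\mathbb{C}$, recovering $l_{sym}^{\mathbb{C}}(n;n)=1$ regardless of parity; for $r<n$ the determinant vanishes identically and the parity enters only through the Chern-class computation, which is why I would base the general argument on the vector-bundle approach rather than on a congruence-normal-form reduction of the pencil, keeping the latter only as a possible fallback.
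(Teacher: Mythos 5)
Your proof is correct, but note that the paper does not actually prove this statement: Theorem 3 is quoted from the literature, with the odd-rank case attributed to Ilic--Landsberg \cite{I-L}, Gantmacher \cite{G}, and Hodge--Pedoe \cite{H-P}, so there is no in-paper proof to compare against. What you have written is, in substance, the Ilic--Landsberg argument specialized to a pencil, and all the delicate points check out: constant fiberwise rank does make $\ker M$ a subbundle (this is where ``every \emph{nonzero} element of the span has rank exactly $r$'' is used, legitimately, since $A,B$ are independent); the symmetry $\langle Mu,v\rangle=\langle Mv,u\rangle$ is exactly what forces the image of $M$ into the annihilator $Q^{\ast}\otimes\mathcal{O}_{\mathbb{P}^1}(1)$ of the kernel, so $\bar M\colon Q\to Q^{\ast}\otimes\mathcal{O}_{\mathbb{P}^1}(1)$ is a fiberwise-injective map of rank-$r$ bundles, hence an isomorphism; and taking degrees gives $d=-d+r$, i.e.\ $2d=r$, the parity contradiction. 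The classical alternative in \cite{G} and \cite{H-P}, which you mention only as a fallback, proceeds instead through the Kronecker/Segre normal form for pencils of complex symmetric matrices and extracts the same parity obstruction combinatorially from the normal form; your bundle-theoretic route is less elementary but cleaner, avoids the case analysis of the normal form, and (as in \cite{I-L}) extends verbatim from a pencil over $\mathbb{P}^1$ to the full projectivized space $\mathbb{P}(S)$. The lower bound via the line $\mathbb{C}A_0$ and the sanity check at $r=n$ via nonvanishing of $\det(sA+tB)$ on $\mathbb{P}^1$ are both fine.
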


We mention also that, in 1962, Flanders
proved the following result:

\begin{thm} [Flanders, \cite{Fl}] \label{Fla}
 If $ r \leq \min\{m, n\}$, a linear subspace of 
 $\mathbb{C}^{m \times n}$
  such that every of its elements has rank less than or equal to $r$ has dimension less than or equal to $r  \max\{m, n\}$.
\end{thm}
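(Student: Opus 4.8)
The plan is to prove Flanders' bound by first reducing to a normal form under the action of $GL_m(\C)\times GL_n(\C)$, then extracting structural constraints from the rank hypothesis via a Schur-complement computation, and finally carrying out a sharp dimension count. Let $V\subseteq \C^{m\times n}$ be a linear subspace with $\rk(B)\le r$ for all $B\in V$, and assume without loss of generality that $m\le n$, so the claim is $\dim V\le rn$. Let $\rho\le r$ be the maximal rank attained on $V$ by some $A_0\in V$; since then $\dim V\le \rho n\le rn$, it suffices to treat $\rho=r$ (the case $V=0$ being trivial). Multiplying on the left and right by invertible matrices preserves all ranks and $\dim V$, so I may assume $A_0=\bp I_r & 0\\ 0&0\ep$, and I write every $B\in V$ in the matching block form, with $B_{11}$ of size $r\times r$, $B_{12}$ of size $r\times(n-r)$, $B_{21}$ of size $(m-r)\times r$ and $B_{22}$ of size $(m-r)\times(n-r)$.

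Next I would exploit the rank hypothesis on the element $A_0+sB+tC\in V$, for $B,C\in V$ and scalars $s,t$. On the Zariski-dense open set where $I_r+sB_{11}+tC_{11}$ is invertible, the rank equals $r$ plus the rank of the Schur complement $(sB_{22}+tC_{22})-(sB_{21}+tC_{21})(I_r+sB_{11}+tC_{11})^{-1}(sB_{12}+tC_{12})$; since the total rank is at most $r$, this Schur complement must vanish identically as a polynomial in $s,t$ (here the infinitude of $\C$ is used). Taking $C=0$ and reading the lowest-order coefficient, that of $t$, gives $B_{22}=0$ for every $B\in V$. Feeding this back and reading the coefficient of $st$ yields the bilinear relation
\[
B_{21}C_{12}+C_{21}B_{12}=0 \qquad \text{for all } B,C\in V.
\]

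Finally, the dimension count. Consider $\pi:V\to \C^{r\times(n-r)}\oplus \C^{(m-r)\times r}$, $B\mapsto(B_{12},B_{21})$; since $B_{22}=0$, its kernel consists of matrices with only $B_{11}$ possibly nonzero, so $\dim\ker\pi\le r^2$. Let $\mathcal X=\{B_{12}:B\in V\}$ and let $d$ be the dimension of the span in $\C^r$ of all columns of all members of $\mathcal X$, so $d\le r$ and $\dim\mathcal X\le(n-r)d$. The bilinear relation shows that any $(0,Y)\in\pi(V)$ satisfies $YX=0$ for every $X\in\mathcal X$, forcing $Y$ to annihilate that $d$-dimensional column span, whence the fibre $\{Y:(0,Y)\in\pi(V)\}$ has dimension at most $(m-r)(r-d)$. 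Combining, $\dim V\le r^2+(n-r)d+(m-r)(r-d)$, which, using $m\le n$ and $d\le r$, simplifies to $rn$. The main obstacle is that the purely structural fact $B_{22}=0$ by itself only gives the weaker bound $r(m+n-r)$; the real work is to extract the bilinear relation and to organize the column-space count so that the surplus $r(m-r)$ is absorbed, while keeping track of the field-theoretic point that the perturbation argument needs infinitely many scalars — automatic over $\C$.
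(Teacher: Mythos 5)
Your proof is correct. One thing to be aware of: the paper does not prove Theorem \ref{Fla} at all --- it quotes Flanders's result as background --- so the natural comparison is with the paper's proof of its main Theorem \ref{mxn}, which contains Flanders's bound as a special case (a linear subspace has minimal rank $s=0$, and $a^K(m\times n;0,r)=r\max\{m,n\}$). Your argument opens exactly as that proof does: normalize a maximal-rank element to $\begin{pmatrix} I_r & 0\\ 0 & 0\end{pmatrix}$ and kill the bottom-right blocks, which you do via the lowest-order Schur-complement coefficient and the paper does via the degree-$(r+1)$ polynomial $\det\bigl((G+tv)^{(1,\ldots,r,j)}_{(1,\ldots,r,i)}\bigr)$; over $\C$ these are the same computation. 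After that you genuinely diverge. The paper projects column-by-column via the maps $\pi_{i,j}$ onto $K^{2r}$, observes that each image is totally isotropic for the hyperbolic form $\sum_i x_i x_{r+i}$ (hence of dimension at most $r$), and then finishes with the complement subspaces $Z$ and $\tilde W$. You instead extract the full matrix identity $B_{21}C_{12}+C_{21}B_{12}=0$ from the $st$-coefficient of the Schur complement of $A_0+sB+tC$ --- which is precisely the polarized, all-entries strengthening of the paper's per-column isotropy conditions --- and then run an elementary rank--nullity count through the column span of $\mathcal{X}=\{B_{12}: B\in V\}$ and its annihilator, yielding the refined intermediate bound $\dim V\le rm+d(n-m)$ with $d\le r$, and hence $\dim V\le rn$; I checked the count ($r^2+(n-r)d+(m-r)(r-d)=rm+d(n-m)$) and the fibre argument ($(0,Y)\in\pi(V)$ forces $YX=0$ for all $X\in\mathcal{X}$), and both are sound. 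What each route buys: yours is shorter and avoids quadratic-form theory entirely, needing only infinitude of the field, and the parameter $d$ even pinpoints the structure of near-extremal spaces; the paper's isotropic-subspace machinery is heavier but is what makes the argument survive the affine setting with the extra lower-rank constraint $s$ (where $W\cap V$ is controlled by Theorem \ref{dSP>=}) and explicit finite-field hypotheses $|K|\ge r+2$, $\operatorname{char}K\ne 2$, neither of which your complex-analytic density argument would give.
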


The theorem above was generalized by de Seguins Pazzis in 2010:

\begin{thm} [de Seguins Pazzis, \cite{dSP3}] \label{FladSP} Let $K$ be a field. 
 If $ r \leq \min \{m, n\}$, an affine subspace of 
 $K^{m \times n}$ such that every of its elements has rank less than or equal to $r$ has dimension less than or equal to $r  \max \{m, n\}$.
\end{thm}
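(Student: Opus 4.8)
I would first assume, after transposing if necessary, that $m\le n$, so that the bound to be proved is $rn$, and argue by induction on $r$ (the case $r=0$ being trivial). If no element of the affine space $\mathcal S$ has rank exactly $r$, then every element has rank at most $r-1$ and the induction hypothesis yields $\dim\mathcal S\le (r-1)n\le rn$; so I may assume that some $M_0\in\mathcal S$ has rank exactly $r$. Since left and right multiplication by invertible matrices is a bijective affine map of $K^{m\times n}$ preserving both the dimension and the rank bound, I may normalize $M_0=\begin{pmatrix} I_r&0\\0&0\end{pmatrix}$. Writing $\mathcal S=M_0+V$ with $V$ a linear subspace and $\dim\mathcal S=\dim V$, the decisive consequence of the linearity of $V$ is that $tN\in V$ for every scalar $t$, whence
$$\rk(M_0+tN)\le r\qquad\text{for all }t\in K,\ N\in V.$$

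\textbf{The infinite-field case.} Here there is a clean shortcut that avoids any block computation. If $M_0\in V$ then $\mathcal S=V$ is already a linear space of rank at most $r$ and we are done; otherwise set $V'=KM_0+V$, a linear subspace with $\dim V'=\dim V+1$. Every element $\lambda M_0+N$ of $V'$ with $\lambda\ne 0$ equals $\lambda\,(M_0+\lambda^{-1}N)$ and therefore has rank at most $r$. These elements form the complement of a hyperplane in $V'$, hence a Zariski-dense subset; since the condition $\rk\le r$ is Zariski-closed (it is the vanishing of all $(r+1)\times(r+1)$ minors), it must hold on all of $V'$. The linear version of the statement (the case of Theorem~\ref{Fla}, valid over any field by the same circle of ideas) then gives $\dim\mathcal S=\dim V\le\dim V'\le rn$.

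\textbf{The finite-field case, and the main obstacle.} Over a finite field both shortcuts above fail: a minor can vanish at every point of $K$ without being the zero polynomial, and $\{\rk\le r\}$ need not contain the closure of a dense open set. One must therefore work directly with the block form $N=\begin{pmatrix}A&B\\C&D\end{pmatrix}$ induced by $M_0$. The rank condition, combined through the Schur complement of the (generically invertible) block $I_r+tA$, forces $D=0$ and the relations $C\,(I_r+tA)^{-1}B=0$, equivalently $CA^kB=0$ for all $k\ge0$; the real task is to extract these relations by an argument that does not rely on a large supply of scalars, for instance through an auxiliary induction that removes one row and one column at a time. One then has to convert these (bilinear) relations, valid for all elements of the linear space $V$ simultaneously, into the sharp dimension estimate $\dim V\le rn$. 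This uniform, field-independent bookkeeping is the delicate core of the theorem and the step I would expect to cost the most effort; once it is in place, the reductions of the first paragraph complete the proof.
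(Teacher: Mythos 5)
Note first that the paper does not prove this statement at all: Theorem \ref{FladSP} is quoted from de Seguins Pazzis \cite{dSP3} as background, so your attempt can only be measured against the statement itself, and against it your proposal has a genuine, self-acknowledged gap. Your infinite-field argument is essentially sound: the Zariski-density trick correctly shows that $V' = KM_0 + V$ consists entirely of matrices of rank at most $r$, reducing the affine statement to the linear one, which over an infinite field is covered by Flanders' original argument (his proof needs $|K| \geq r+1$, so your parenthetical claim that Theorem \ref{Fla} holds ``over any field by the same circle of ideas'' is not right as stated --- the linear theorem over small fields is itself a later and harder result --- but this does not hurt the infinite-field branch). The problem is that the theorem's entire content, given that the large-field case is classical, is precisely the small-field case, and there your proposal stops at naming the obstacle rather than overcoming it.

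Concretely, the relations you propose to extract in the finite-field paragraph ($D=0$ and $CA^kB=0$ for all $k\geq 0$, via the Schur complement of $I_r + tA$) are obtained by letting $t$ range over enough scalars that a polynomial identity in $t$ can be read off coefficient by coefficient; this is exactly the mechanism the present paper uses in the proof of Theorem \ref{mxn}, and exactly why that theorem carries the explicit hypothesis $|K| \geq r+2$, whereas Theorem \ref{FladSP} has no cardinality hypothesis at all. Over, say, $\mathbb{F}_2$ with $r$ large, the determinant of the relevant $(r+1)\times(r+1)$ submatrix is a polynomial of degree up to $r+1$ vanishing at every point of $K$ without being zero, so these relations simply cannot be derived this way, and no substitute argument is supplied. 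Moreover, even if the relations were granted for every $N \in V$, they are conditions on individual elements, and converting them into the uniform bound $\dim V \leq rn$ is the linear Flanders problem all over again, which is itself delicate over small fields. De Seguins Pazzis's actual proof in \cite{dSP3} proceeds by different, characteristic-free means; your proposal, as written, proves the theorem only for infinite $K$ and honestly flags, but does not close, the case that makes the theorem worth stating.
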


In \cite{Ru} we proved the following theorems:

\begin{thm} [Rubei] \label{mio}  Let $n,r \in \mathbb{N}$ with $r \leq n$.
Then
$$a_{sym}^{\mathbb{R}}(n;r) \leq
\left\lfloor \frac{r}{2} \right\rfloor
\left(n-  \left\lfloor \frac{r}{2} \right\rfloor\right)
.$$ 
\end{thm}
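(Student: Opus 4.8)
\emph{Proof strategy.} The plan is to reduce the problem to two transparent linear-algebra estimates after putting one point of $S$ into normal form. Write $S = A_0 + V$ with $V$ a linear subspace, and recall that congruence $A \mapsto P^{T} A P$ (with $P$ invertible) preserves symmetry, rank and the dimension of an affine subspace; so I may assume $A_0 = \operatorname{diag}(I_p, -I_q, 0_{n-r})$, where $(p,q)$ is the inertia of $A_0$, $p+q=r$, and I set $J = \operatorname{diag}(I_p,-I_q)$ and $s = \min\{p,q\} \le \lfloor r/2 \rfloor$. The engine of the proof is that for every $B \in V$ the pencil $A_0 + tB$ has rank $r$ for all $t \in \R$. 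Writing $B = \begin{pmatrix} B_{11} & B_{12} \\ B_{12}^{T} & B_{22} \end{pmatrix}$ in the block sizes $r, n-r$ and applying the Schur complement of the (invertible, for small $t$) block $J + tB_{11}$, the condition $\rk(A_0+tB)=r$ forces $t B_{22} - t^{2} B_{12}^{T}(J+tB_{11})^{-1} B_{12} = 0$; letting $t \to 0$ yields first $B_{22}=0$ and then $B_{12}^{T} J B_{12}=0$. Thus every $B \in V$ has the shape $\begin{pmatrix} B_{11} & B_{12} \\ B_{12}^{T} & 0 \end{pmatrix}$ and the columns of $B_{12}$ span a $J$-isotropic subspace of $\R^{r}$.

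Next I would split $V$ along the linear projection $\pi : V \to \R^{r \times (n-r)}$, $B \mapsto B_{12}$, so that $\dim V = \dim(\ker \pi) + \dim(\operatorname{im}\pi)$. For the image $W = \operatorname{im}\pi$: polarizing the identity $X^{T}JX=0$ (valid for every $X \in W$) gives that $X^{T}JY$ is antisymmetric for all $X,Y \in W$, hence for each fixed $v \in \R^{n-r}$ the set $\{Xv : X \in W\}$ is a $J$-isotropic subspace of $\R^{r}$ and so has dimension at most $s$. Since a subspace is contained in the direct sum of its coordinate projections, summing over a basis of $\R^{n-r}$ gives $\dim W \le s\,(n-r)$. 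For the kernel $V_0 = \ker\pi = \{B \in V : B_{12}=0\}$: such $B$ equal $\operatorname{diag}(B_{11},0)$ with $J + tB_{11}$ invertible for all $t$, and the map sending $B$ to the $p \times q$ off-diagonal block of $B_{11}$ is injective, because if that block vanishes then $B_{11} = \operatorname{diag}(C,D)$ and the nonsingularity of $\operatorname{diag}(I_p+tC,\,-I_q+tD)$ for all real $t$ forces the real symmetric matrices $C,D$ to have no nonzero eigenvalue, i.e. $C=D=0$. Hence $\dim V_0 \le pq$.

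Combining the two estimates, $\dim S = \dim V \le pq + s\,(n-r) = s\,(n-s)$, and since $s \le \lfloor r/2\rfloor \le n/2$ and $x \mapsto x(n-x)$ is increasing on $[0,n/2]$, this is at most $\lfloor r/2\rfloor\bigl(n-\lfloor r/2\rfloor\bigr)$, which is the claimed bound (indeed a little sharper). The step I expect to be the main obstacle is the normal-form analysis of the pencil: squeezing out of the single hypothesis $\rk(A_0+tB)\equiv r$ the two exact structural facts $B_{22}=0$ and $B_{12}^{T}JB_{12}=0$, and recognizing that the resulting $J$-isotropy is exactly what caps the relevant fibre dimensions by $s=\min\{p,q\}$; once this is in place, the projection and injectivity bookkeeping is routine.
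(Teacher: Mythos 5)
Your argument is correct, and it is worth noting at the outset that the present paper does not actually contain a proof of Theorem \ref{mio}: the statement is quoted from \cite{Ru}, so the fair comparison is with the proofs of the analogous Theorems \ref{mxn} and \ref{anti} given here. Measured against those, you follow the same overall architecture --- put one point of $S$ in normal form (Sylvester inertia $\operatorname{diag}(I_p,-I_q,0)$ instead of $\operatorname{diag}(I_r,0)$), show the lower-right block of every direction vector vanishes, bound the off-diagonal data by an isotropy argument for a nondegenerate quadratic form, and treat the upper-left block separately --- but several ingredients are genuinely different. Where the paper extracts $B_{22}=0$ and the isotropy relations by counting degrees of determinant polynomials (valid over any field with $|K|\ge r+2$), you use $\R$-specific limits $t\to 0$ of the Schur complement $tB_{22}-t^{2}B_{12}^{T}(J+tB_{11})^{-1}B_{12}$, which is legitimate since the theorem is stated over $\R$ (note you implicitly use $J^{-1}=J$, which holds). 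Where the paper bounds the upper-left contribution by invoking Theorem \ref{dSP>=} (resp.\ Theorem \ref{dSPant2}), you prove injectivity of $B\mapsto$ (off-diagonal $p\times q$ block of $B_{11}$) on $\ker\pi$ via the reality of eigenvalues of real symmetric matrices --- precisely the technique the introduction attributes to \cite{Ru} --- getting $\dim\ker\pi\le pq$. The decisive quantitative difference is in the isotropy step: in the proof of Theorem \ref{mxn} each projection $\pi_{i,j}$ lands in $K^{2r}$ with the hyperbolic form $\sum_i x_ix_{r+i}$, whose totally isotropic subspaces can have dimension $r$, whereas your columnwise argument (polarizing $X^{T}JX=0$ to antisymmetry of $X^{T}JY$, then looking at $\{Xv\}$) caps each column space by the Witt index $s=\min\{p,q\}$ of $J$ on $\R^{r}$; this is exactly what produces $\lfloor r/2\rfloor$ rather than $r$. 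Your bookkeeping via rank--nullity, $\dim V=\dim\ker\pi+\dim\operatorname{im}\pi$, replaces the paper's complement subspaces $Z$, $\tilde W$ and Grassmann's formula, and the closing computation $pq+s(n-r)=s(n-s)\le\lfloor r/2\rfloor(n-\lfloor r/2\rfloor)$ (monotonicity of $x\mapsto x(n-x)$ on $[0,n/2]$, with $s\le\lfloor r/2\rfloor\le n/2$) is right; in fact your version is slightly sharper than the stated bound, since it is expressed through the inertia $(p,q)$ of the base point $A_0$.
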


\begin{thm} [Rubei] \label{mio2}  Let $m,n,r \in \mathbb{N}$ with $r \leq m \leq n$.
Then
$$a^{\mathbb{R}}(m \times n;r) = rn- \frac{r(r+1)}{2} .$$ 
\end{thm}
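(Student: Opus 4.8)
The plan is to prove the two inequalities separately: first exhibit an affine space of constant rank $r$ of dimension $rn-\frac{r(r+1)}{2}$, then show none is larger. For the lower bound I would take
\[
S_0=\left\{\bp U & B\\ 0 & 0\ep \;\Big|\; U\in\R^{r\times r}\ \text{upper triangular with $1$'s on the diagonal},\ B\in\R^{r\times(n-r)}\right\},
\]
the row blocks having sizes $r,m-r$ and the column blocks $r,n-r$. Every element is upper unitriangular in its leading $r\times r$ block, hence has rank exactly $r$, and $\dim S_0=\binom{r}{2}+r(n-r)=rn-\frac{r(r+1)}{2}$, so $a^{\R}(m\times n;r)\ge rn-\frac{r(r+1)}{2}$.

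For the upper bound, let $S$ have constant rank $r$, fix $A_0\in S$, and multiply on the left and right by invertible matrices (preserving ranks and dimensions) to assume $A_0=\bp I_r & 0\\ 0 & 0\ep$. Put $V=S-A_0$; this is a linear space, and $A_0+tM$ has rank $r$ for all $M\in V$ and all $t\in\R$. Writing $M=\bp A & B\\ C & D\ep$ in the same block sizes and expanding the Schur complement of the block $I_r+tA$ (invertible for small $t$), the rank equals $r+\rk\!\left(tD-t^2C(I_r+tA)^{-1}B\right)$; constancy forces this Schur complement to vanish identically in $t$, so the coefficient of $t$ gives $D=0$ for every $M\in V$, and the rest of the identity gives $C(I_r+A)^{-1}B=0$ whenever $I_r+A$ is invertible. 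Polarising the degree-two part yields the bilinear relation $C_1B_2+C_2B_1=0$ for all $M_1,M_2\in V$.

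The core of the argument is to convert these relations into the dimension bound. I would let $\gamma\colon V\to\R^{(m-r)\times r}$, $M\mapsto C$, and set $V_1=\ker\gamma$, so that $\dim V=\dim V_1+\dim\gamma(V)$. On $V_1$ the bottom rows vanish, so $A_0+V_1$ is (up to the zero rows) an affine space of $r\times n$ matrices of constant full row rank $r$, reducing the estimate for $V_1$ to the case $m=r$. In that case a direction $(A\mid 0)\in V_1$ must satisfy $I_r+A\in GL_r$, so the space of such directions has dimension at most $\binom{r}{2}$ by a Gerstenhaber--type bound; combining this with the at most $r(n-r)$ degrees of freedom in the top-right block yields $\dim V_1\le rn-\frac{r(r+1)}{2}$. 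Finally the bilinear relation shows that for $C\in\gamma(V)$ and every top-right block $B$ occurring in $V_1$ one has $CB=0$, so a nonzero image $\gamma(V)$ confines those $B$ to the common kernel $U=\bigcap_{C\in\gamma(V)}\ker C$ and forces a compensating drop in $\dim V_1$; balancing the two contributions keeps $\dim V\le rn-\frac{r(r+1)}{2}$.

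The main obstacle is precisely this last balancing step and the Gerstenhaber--type lemma it rests on. Over $\R$ the condition ``$I_r+A$ invertible for all $A$ in a linear space'' means that no such $A$ has a nonzero real eigenvalue --- a genuinely field-dependent statement, weaker than nilpotency --- and it is here that the hypothesis $K=\R$ enters. Controlling $\dim\gamma(V)$ and $\dim V_1$ at the same time through the common kernel $U$, equivalently organising the whole estimate as an induction on $r$ (or on $m$) with base case $m=r$, is where the real work lies; the first-order reduction to $D=0$ only caps $\dim V$ by $r(m+n-r)$, and squeezing out the extra $rm-\binom{r}{2}$ is exactly what the global (not merely infinitesimal) constant-rank condition, via the bilinear relation, must supply.
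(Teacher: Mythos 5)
Your proposal is correct, and its core counting step takes a genuinely different route from the paper's. Both share the lower-bound example (unitriangular leading $r\times r$ block plus free top-right block) and the first reduction: normalize the base point to $\bp I_r & 0 \\ 0 & 0 \ep$ and extract from constancy of rank the relations $D=0$ and $CB=0$, polarized to $C_1B_2+C_2B_1=0$. From there the paper (in the proof of Theorem \ref{mxn}, of which this statement is the case $K=\R$, $s=r$) encodes the quadratic relation as \emph{total isotropy}: the projections $\pi_{i,j}$ sending a direction to the pair (column $j$ of $B$, row $i$ of $C$) have image totally isotropic for the hyperbolic form $\sum_i x_ix_{r+i}$ on $K^{2r}$, hence of dimension at most $r$; summing gives $\dim\pi(V)\le (n-r)r$, and an explicit complement $Z\oplus\tilde W$ with $V\cap(Z\oplus\tilde W)=\{0\}$ assembles the bound, the block $W\cap V$ being controlled by Theorem \ref{dSP>=}. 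You instead apply rank--nullity twice ($\gamma=$ the $C$-block, then the $B$-block on $V_1=\ker\gamma$) and trade the two images off through the common kernel $U$, $u=\dim U$: the relation $C_1B_2=0$ confines the $B$-columns of $V_1$ to $U$, so $\dim V\le \binom{r}{2}+u(n-r)+(m-r)(r-u)$, where $\binom{r}{2}$ bounds the kernel piece via the no-nonzero-real-eigenvalue/trivial-intersection-with-symmetric-matrices trick (exactly the real-specific lemma of the original proof in \cite{Ru}). The ``balancing'' you single out as the main obstacle is in fact immediate from your own setup: the bound is linear in $u$ with slope $n-m\ge 0$, so it is maximized at $u=r$, giving precisely $\binom{r}{2}+r(n-r)=rn-\frac{r(r+1)}{2}$. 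What each approach buys: yours avoids quadratic-form theory and the complement construction altogether, and is shorter for the real constant-rank case; the paper's isotropy route is what scales to the rank-in-a-range version $s<r$ and to general fields with $|K|\ge r+2$ and characteristic $\ne 2$, where your eigenvalue argument for the kernel piece must be replaced by Theorem \ref{dSP>=}.
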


We proved also a statement 
on  the maximal dimension of affine subspaces with constant signature in $\R_s^{n \times n}$ and one 
on the maximal dimension of affine  subspaces of constant rank in the space on $\R$ of the hermitian matrices.

In \cite{Ru2} we studied the case of antisymmetric matrices and we  proved:

\begin{thm} [Rubei] \label{thmantisym}
For $n \geq 2r+2  $: 
$$a_{ant}^{\mathbb{R}}( n; 2r) = (n-r-1) r   .$$ 
For $n=2r$ 
$$a_{ant}^{\mathbb{R}}( n; 2r) =r(r-1)   .$$ 
For $n=2r+1$ 
$$a_{ant}^{\mathbb{R}}( n; 2r) = r(r+1)   .$$ 
\end{thm}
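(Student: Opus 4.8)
The plan is to establish matching upper and lower bounds in each of the three regimes. Throughout I use that congruence $A\mapsto P^{\top}AP$ (with $P\in GL_n(\mathbb{R})$) preserves antisymmetry, rank and the dimension of an affine subspace; so, fixing a base point $A_0\in S$ and applying a suitable congruence, I may assume $A_0=J\oplus 0_p$, where $p=n-2r$, $J\in\mathbb{R}_a^{2r\times 2r}$ is a fixed nonsingular (standard symplectic) matrix and $0_p$ is the zero matrix of size $p$. Writing $S=A_0+V$ with $V$ linear, the hypothesis becomes: $A_0+X$ has rank $2r$ for every $X\in V$ (hence for its whole linear span, since $tX\in V$).

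For the lower bounds I exhibit explicit families. Partitioning the indices into blocks of sizes $r,r,p$, consider
$$M(B,C,D)=\begin{pmatrix} 0 & B & 0\\ -B^{\top} & C & D\\ 0 & -D^{\top} & 0\end{pmatrix},\qquad B\in GL_r(\mathbb{R}),\ C\in\mathbb{R}_a^{r\times r},\ D\in\mathbb{R}^{r\times p}.$$
Because $B$ is invertible the first $r$ rows span the coordinates $r+1,\dots,2r$, so the last $p$ rows $(0,-D^{\top},0)$ are combinations of them and $\rk M=2r$ identically. Letting $B$ run over the unipotent upper–triangular matrices (a $\binom r2$–dimensional affine set of invertibles), $C$ over $\mathbb{R}_a^{r\times r}$ and $D$ over $\mathbb{R}^{r\times p}$ gives an affine subspace of constant rank $2r$ and dimension $\binom r2+\binom r2+rp=r(r-1)+rp=r(n-r-1)$, settling $n=2r$ (take $p=0$) and $n\ge 2r+2$. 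For $n=2r+1$ one does better with $\left(\begin{smallmatrix} K & v\\ -v^{\top} & 0\end{smallmatrix}\right)$, $K$ a nonsingular antisymmetric $2r\times 2r$ block and $v\in\mathbb{R}^{2r}$: here the Schur complement $v^{\top}K^{-1}v$ is a $1\times1$ antisymmetric scalar, hence $0$, so $\rk=2r$ with $v$ free; taking $K$ in an $r(r-1)$–dimensional family gives dimension $r(r-1)+2r=r(r+1)$.

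For the upper bounds, fix $X\in V$ and expand. Since $J+tX_{11}$ is invertible for small $t$, the top–left $2r\times2r$ block already forces rank $\ge 2r$, so the Schur complement $t\,X_{22}+t^{2}\,X_{12}^{\top}(J+tX_{11})^{-1}X_{12}$ vanishes identically in $t$. The coefficient of $t$ yields $X_{22}=0$ for all $X\in V$, and evaluation of $X_{12}^{\top}(J+tX_{11})^{-1}X_{12}$ at $t=0$ yields $X_{12}^{\top}J^{-1}X_{12}=0$; polarizing over $V$ shows that $X_{12}^{\top}J^{-1}Y_{12}$ is symmetric for all $X,Y\in V$. Put $\pi(X)=X_{12}$ and $U=\pi(V)$, so $\dim V=\dim\ker\pi+\dim U$. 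On $\ker\pi$ one has $X_{12}=X_{22}=0$, whence $A_0+X=(J+X_{11})\oplus 0_p$ with $J+X_{11}$ nonsingular; thus $\dim\ker\pi\le a_{ant}^{\mathbb{R}}(2r;2r)$. For $U\subseteq\mathbb{R}^{2r\times p}$ the symmetry condition is vacuous when $p=1$, giving only $\dim U\le 2r$; when $p\ge2$ it forces $U$ to be totally isotropic for the nondegenerate symmetric forms $(B,C)\mapsto (B^{\top}J^{-1}C)_{ab}-(B^{\top}J^{-1}C)_{ba}$, and restricting to a pair of columns reduces to $p=2$, where the relevant form has balanced signature $(2r,2r)$ and maximal isotropic dimension $2r=rp$; a slicing argument then gives $\dim U\le rp$ for every $p\ge2$ (sharpness coming from the Lagrangian–image families $\{B:\operatorname{im}B\subseteq L\}$). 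Combining, $\dim V\le a_{ant}^{\mathbb{R}}(2r;2r)+rp$ for $p\ge2$ and $\le a_{ant}^{\mathbb{R}}(2r;2r)+2r$ for $p=1$.

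Everything is thereby reduced to the base case $n=2r$, namely $a_{ant}^{\mathbb{R}}(2r;2r)\le r(r-1)$, which I expect to be the main obstacle. Here $\operatorname{Pf}(J+X)\neq0$ for all $X\in V$, so $t\mapsto\operatorname{Pf}(J+tX)$ is a real polynomial of degree $\le r$ with no real root; when $r$ is odd this already forces $\operatorname{Pf}(X)=0$ on $V$, but the even case and the precise dimension require more. I would proceed by induction on $r$, peeling a symplectic pair: after reordering so that $J=J''\oplus\left(\begin{smallmatrix}0&1\\-1&0\end{smallmatrix}\right)$, the Schur complement with respect to the last $2\times2$ block sends $S$ to nonsingular antisymmetric $(2r-2)\times(2r-2)$ matrices, to which the bound $(r-1)(r-2)$ applies. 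The delicate point — and the genuine heart of the theorem — is that this Schur complement is not linear in the entries, so a naive projection loses too much; the off–diagonal interaction block must be controlled through the non-vanishing of the Pfaffian, mirroring the symmetric analogue underlying Theorem~\ref{mio}. Granting $a_{ant}^{\mathbb{R}}(2r;2r)=r(r-1)$, the inequalities above give $\dim V\le r(r-1)+rp=r(n-r-1)$ for $n\ge 2r+2$ and $\dim V\le r(r-1)+2r=r(r+1)$ for $n=2r+1$, matching the constructions and completing the proof.
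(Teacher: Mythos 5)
There is a genuine gap, and you have named it yourself: the base case $a_{ant}^{\mathbb{R}}(2r;2r)\le r(r-1)$ is \emph{granted}, not proved. Everything else in your argument funnels into this inequality, so what you have is a correct reduction, not a proof of the theorem. Your sketch for the base case (induction on $r$, peeling off a symplectic $2\times 2$ block and passing to the Schur complement) does not go through as stated, for the reason you already identify: the Schur complement with respect to the last block is a rational, not affine, function of the entries, so its image is not an affine subspace of nonsingular $(2r-2)\times(2r-2)$ alternating matrices and the inductive bound $(r-1)(r-2)$ cannot be applied to it; controlling the off-diagonal interaction is exactly the hard content, and no mechanism for it is supplied. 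Note that the missing inequality is precisely Theorem \ref{dSPant2} specialized to $s=n=2r$ (over $\mathbb{R}$ the cardinality hypothesis is vacuous): $\frac{2r(2r-1)}{2}-\frac{(2r)^2}{4}=r(r-1)$. Indeed, the present paper does not reprove Theorem \ref{thmantisym} at all --- it cites \cite{Ru2} --- and in its own proof of the range version (Theorem \ref{anti}) it runs exactly your reduction (Schur complement kills $X_{22}$, isotropy of the off-diagonal columns, then splitting off the top-left block) and closes the block case by invoking Theorem \ref{dSPant2}, which is the step you leave open.

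Two smaller points. First, your ``slicing argument'' for $\dim U\le rp$ is only complete for $p$ even: pairing columns $(1,2),(3,4),\dots$ and using $\dim\pi_{ab}(U)\le 2r$ per pair works because the pairs partition the coordinates, but for odd $p\ge 3$ naive pairing leaves a lone column contributing $2r$ and yields only $(p+1)r$. One needs the refinement that the three pairwise projections $\pi_{1,2},\pi_{1,3},\pi_{2,3}$ on a triple of columns force the triple block to have dimension at most $3r$ rather than $4r$ --- this is exactly Lemma \ref{lemma3}, used for the same purpose in the proof of Theorem \ref{anti}; you assert the conclusion without this ingredient. Second, the parts of your proposal that are complete are correct and verifiable: the three lower-bound constructions (the block family with unipotent $B$, and the bordered nonsingular $K$ with $v^{\top}K^{-1}v=0$ for $n=2r+1$) do achieve $r(n-r-1)$, $r(r-1)$ and $r(r+1)$ respectively, and the derivation of $X_{22}=0$, the isotropy condition $X_{12}^{\top}J^{-1}X_{12}=0$, and the rank--nullity split $\dim V=\dim\ker\pi+\dim U$ with $\dim\ker\pi\le a_{ant}^{\mathbb{R}}(2r;2r)$ are all sound. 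So the architecture matches the known proofs; what is missing is the heart.
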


The result was generalized  
in \cite{dSP2} by de Seguins Pazzis for fields $K$ such that $|K| \geq \max\{ 2r-1, r+2\}$.

Moreover, de Seguins  Pazzis proved the following theorems on subspaces of matrices with rank bounded below 
(see Theorem 8 in \cite{dSP} and Theorem 4 in \cite{dSP2} respectively):

\begin{thm}  [de Seguins  Pazzis, \cite{dSP}] \label{dSP>=} Let $s, m , n \in \N  $ with $s  \leq \min\{m, n\}$ and let $K$ be  a field. The maximal dimension of an affine subspace $S$ in  $K^{m \times n}$  such that $rk(A) \geq s$ for any $A \in S$ is $mn- \binom{s+1}{2}$.

\end{thm}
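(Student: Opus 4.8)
The plan is to establish the two inequalities separately, the construction being easy and the upper bound being the substantial part. For the lower bound I would fix the $\binom{s+1}{2}$ entries $a_{ij}$ with $1\le i\le j\le s$, imposing $a_{ii}=1$ and $a_{ij}=0$ for $i<j\le s$, and let the remaining $mn-\binom{s+1}{2}$ entries be free. In every matrix of the resulting affine subspace $S_0$ the top-left $s\times s$ block is lower triangular with $1$'s on the diagonal, hence invertible, so $\rk(A)\ge s$ for all $A\in S_0$; taking all free entries equal to $0$ produces a matrix of rank exactly $s$, so the minimum in the statement is attained. Thus $\dim S_0=mn-\binom{s+1}{2}$ realizes the claimed value.

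For the upper bound I would argue by induction on $n$, projecting away one column. Write $S=A_0+V$ and let $\pi\colon K^{m\times n}\to K^{m\times(n-1)}$ delete the last column; put $S'=\pi(S)$ and let $U=\{c\in K^m : E(c)\in V\}$, where $E(c)$ denotes the matrix whose last column is $c$ and whose other columns vanish, so that $U$ is the direction space of the fibres of $\pi|_S$ and $\dim S=\dim S'+\dim U$. Deleting a column drops the rank by at most one, so every element of $S'$ has rank at least $s-1$, and the inductive hypothesis applied with parameters $(m,n-1,s-1)$ gives $\dim S'\le m(n-1)-\binom{s}{2}$. Since $\binom{s+1}{2}=\binom{s}{2}+s$, the theorem would follow from the fibre estimate $\dim U\le m-s$.

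The difficulty is that this fibre estimate is false in general: when $U$ is large the inductive bound on $\dim S'$ is far from tight, and one already sees $2$-dimensional examples in $K^{3\times 3}$ with $s=2$ where $\dim U=m-s+1$ while some element of $S'$ has rank $s-1$. The resolution is a dichotomy on $\dim U$. In the regime where the fibre is small the estimate above applies directly. In the complementary regime one must instead show that the abundance of admissible last-column translates forces the first $n-1$ columns to already span enough, so that $S'$ in fact has all ranks $\ge s$; one may then re-run the induction with the unchanged parameter $s$, bound $\dim S'\le m(n-1)-\binom{s+1}{2}$, and absorb the entire last column through the crude estimate $\dim U\le m$. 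Identifying the exact threshold at which these two estimates meet, and proving the rank-preservation statement in the large-fibre regime---entirely field-independently, since over finite fields neither genericity nor limiting arguments are available---is the step I expect to be the main obstacle; it is the place where the argument must be genuinely clever rather than bookkeeping.

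Finally the induction must bottom out at $n=s$, where $S\subseteq K^{m\times s}$ consists of matrices of full column rank and the target bound reads $ms-\binom{s+1}{2}=s(m-s)+\binom{s}{2}$. This base case carries the deepest content. For $m=s$ it asserts that an affine subspace of invertible $s\times s$ matrices has dimension at most $\binom{s}{2}$: normalizing one element to the identity turns $S$ into $I+V$ with $\det(I+\lambda B)\neq 0$ for every $B\in V$ and every $\lambda\in K$, which over an algebraically closed field means $V$ is a space of nilpotent matrices, whence Gerstenhaber's theorem gives $\dim V\le\binom{s}{2}$; the general base case is the rectangular, arbitrary-field refinement of this. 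I would treat it first and in isolation, as its proof rather than the inductive scaffolding is what ultimately powers the whole result, with Flanders' theorem for rank bounded above (Theorem~\ref{FladSP}) serving as the mirror-image landmark on the other side.
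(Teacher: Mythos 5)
First, for context: the paper contains no proof of this statement to compare you against --- it is Theorem 8 of de Seguins Pazzis \cite{dSP}, imported as a black box and used only at the step $(\ast)$ near the end of the proof of Theorem \ref{mxn} (with its alternating analogue, Theorem \ref{dSPant2}, playing the same role in Theorem \ref{anti}). So your attempt must stand on its own, and it does not: your lower-bound construction is correct and complete, but both halves of your upper-bound plan have genuine holes, and the first is not merely unfinished but irreparable in the form you state. Your large-fibre regime rests on the claim that once $\dim U$ exceeds some threshold, every element of $S'=\pi(S)$ must have rank $\geq s$. Consider, over \emph{any} field, $S = I_m + \langle E_{1,m},\dots,E_{m-1,m}\rangle \subseteq K^{m\times m}$, i.e.\ the identity with free entries above the diagonal in the last column: every element is unipotent upper triangular, so $\operatorname{rk}=m=s$ throughout, and $\dim U = m-1$, as large as possible short of $m$; yet $S'$ is the single matrix $(e_1|\cdots|e_{m-1})$, of rank $s-1$. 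Hence no threshold on $\dim U$ yields the rank-preservation you need, and in that regime you are forced back to the parameter-$(s-1)$ estimate $\dim S \leq m(n-1)-\binom{s}{2}+\dim U$, which with $\dim U = m-1$ overshoots the target $mn-\binom{s+1}{2}$ by $s-1$. (In the example $\dim S$ is in fact small, so the theorem is safe --- only your route to it collapses; some different inductive statement, or a non-column-peeling argument as in \cite{dSP}, is required.)

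Second, the base case you explicitly defer is essentially the whole theorem, and your Gerstenhaber reduction for it genuinely needs algebraic closure: $\det(I+\lambda B)\neq 0$ for all $\lambda\in K$ only says that $B$ has no nonzero eigenvalue \emph{in} $K$, not that $B$ is nilpotent. Over $\R$, take $B=\begin{pmatrix} 0 & 1\\ -1 & 0\end{pmatrix}$: then $\det(I+\lambda B)=1+\lambda^2\neq 0$, so $I+\langle B\rangle$ is an affine line of invertible matrices whose direction is not nilpotent; over $\mathbb{F}_q$ the regular representation of $\mathbb{F}_{q^s}$ gives higher-dimensional non-nilpotent examples. These do not violate the bound $\binom{s}{2}$, but they destroy the reduction to Gerstenhaber's theorem, and with it your proof of the square base case over a general field; the rectangular, arbitrary-field version you promise to ``treat first and in isolation'' is never treated at all. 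Since field-independence is precisely why this paper cites \cite{dSP} (Theorem \ref{mxn} is proved for every field with $|K|\geq r+2$ and characteristic $\neq 2$), what remains of your proposal after these two gaps is the easy construction plus an honest map of where the difficulty lives --- a reasonable plan, but not a proof.
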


\begin{thm}  [de Seguins  Pazzis, \cite{dSP2}] \label{dSPant2} Let $s, n \in \N  $ with $s$ even and  $s \leq n $ and let $K$ be  a field such that $|K| \geq n-1$ if $n$ is even and  $|K| \geq  n-2$ if $n$ is odd. The maximal dimension of an affine subspace $S$ in $K^{n \times n}_a  $ such that $rk(A) \geq s$ for any $A \in S$
  is $$\frac{n(n-1)}{2}- \frac{s^2}{4}.$$

\end{thm}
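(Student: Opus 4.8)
The plan is to prove the two inequalities separately: an explicit construction giving the lower bound $\frac{n(n-1)}{2}-\frac{s^{2}}{4}$, and an inductive argument for the matching upper bound, the latter being where the hypothesis on $|K|$ is decisive.

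For the lower bound, set $k=s/2$ and I would force the top-left $s\times s$ principal submatrix to have a nowhere-vanishing Pfaffian while prescribing only $k^{2}$ of its entries. Concretely, let $S$ be the affine subspace of $K^{n\times n}_a$ cut out by the $k^{2}$ conditions $a_{2l-1,2l}=1$ for $1\le l\le k$ together with $a_{2l-1,j}=0$ for $1\le l\le k$ and $2l<j\le s$. In the Pfaffian expansion of the top-left $s\times s$ block, these conditions kill every perfect matching of $\{1,\dots,s\}$ except the one pairing $2l-1$ with $2l$, so that Pfaffian is identically $\pm1$; hence every element of $S$ has rank at least $s$. The prescribed entries are distinct and lie strictly above the diagonal, hence are independent coordinates on $K^{n\times n}_a$, so that $\dim S=\binom{n}{2}-k^{2}$, the claimed value. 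This construction works over every field.

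For the upper bound I would argue by induction on $n$, keeping $s$ fixed, with base case $n=s$. Write $A\in S$ in block form $A=\begin{pmatrix} A' & v\\ -v^{T} & 0\end{pmatrix}$ and let $\pi(A)=A'$ denote the deletion of the last row and column, a linear projection whose kernel has dimension $n-1$. The key elementary fact is that restricting a skew form to a hyperplane $H$ lowers its rank by $0$ or $2$, and by $0$ whenever $H\not\supseteq\ker A$. Thus, if I can find a single hyperplane $H\subset K^{n}$ with $H\not\supseteq\ker A$ for every $A\in S$ of non-maximal rank, then after a congruence $A\mapsto PAP^{T}$ (which preserves antisymmetry, rank and $\dim S$) I may assume $H=\langle e_1,\dots,e_{n-1}\rangle$, so that $\pi(S)$ is an affine subspace of $K_a^{(n-1)\times(n-1)}$ all of whose elements have rank $\ge s$. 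By the inductive hypothesis $\dim\pi(S)\le\binom{n-1}{2}-k^{2}$, and since each fibre of $\pi$ is an affine subspace of $K^{n-1}$ of dimension at most $n-1$, I would conclude $\dim S\le\binom{n-1}{2}-k^{2}+(n-1)=\binom{n}{2}-k^{2}$. The elements of maximal rank $n$ occur only when $n$ is even, and then $s\le n-2$ forces their restriction to still have rank $\ge s$, so they cause no trouble.

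The main obstacle is exactly the existence of such a hyperplane $H$, and this is where the size of $K$ enters. The bad linear functionals are those lying in $\bigcup_{A\in S}(\ker A)^{\perp}$, a union of proper subspaces of the $n$-dimensional dual $(K^{n})^{*}$, and one must show that the hypothesis $|K|\ge n-1$ (resp.\ $n-2$) guarantees that this degeneracy locus does not exhaust $(K^{n})^{*}$, so that a good $\phi$, hence a good $H$, exists; this is precisely the step that fails over too-small fields and accounts for the stated bounds on $|K|$. Finally, the base case $n=s$, namely bounding the dimension of an affine subspace of \emph{nonsingular} antisymmetric $s\times s$ matrices by $\binom{s}{2}-k^{2}=k(k-1)$, must be handled on its own, since the hyperplane reduction degenerates here (restricting a nonsingular skew form to any hyperplane drops the rank). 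For this I would normalize a base point to the standard symplectic form and analyse the resulting affine family of invertible matrices, again invoking the principle that a suitable non-vanishing condition can be met once the field is large enough.
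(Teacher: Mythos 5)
First, note that the paper you are being compared against does not prove this statement at all: Theorem \ref{dSPant2} is quoted from de Seguins Pazzis \cite{dSP2} and is used as an external ingredient (it supplies the inequality $(\ast)$ in the proof of Theorem \ref{anti}). So your proposal can only be judged on its own merits, and on those merits it is incomplete. Your lower bound is correct and complete: the $k^2$ conditions $a_{2l-1,2l}=1$ and $a_{2l-1,j}=0$ for $2l<j\le s$ do kill every perfect matching of $\{1,\dots,s\}$ in the Pfaffian expansion except $(1,2),(3,4),\dots,(s-1,s)$, so the top-left $s\times s$ block has Pfaffian $\pm 1$ and every element of $S$ has rank at least $s$, over any field, with $\dim S=\binom{n}{2}-k^2$ exactly as claimed. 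Your reduction framework for the upper bound is also sound as far as it goes: restriction to a hyperplane $H$ preserves the rank of $A$ when $H\not\supseteq\ker A$ (since $A$ descends to $K^n/\langle u\rangle\cong H$ for $u\in\ker A\setminus H$), the fibre count $\dim S\le\dim\pi(S)+(n-1)$ is right, and in fact your requirement is stronger than necessary: since alternating ranks are even and restriction drops rank by at most $2$, only the elements of rank \emph{exactly} $s$ are dangerous, so you only need $H\not\supseteq\ker A$ for those.

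The genuine gap is that the two steps you yourself flag as the crux are asserted, not proved, and they are precisely where all the difficulty of the theorem lives. (i) The bad locus $\bigcup_{A\in S,\,\rk A=s}(\ker A)^{\perp}$ is a union of $s$-dimensional subspaces of the $n$-dimensional dual, but the union is indexed by $S$, which is infinite whenever $K$ is; the standard fact that a vector space over a large field is not a finite union of proper subspaces does not apply, and an infinite union of $s$-dimensional subspaces can perfectly well cover $(K^n)^*$. Showing that the specific family of kernels arising from the rank-$s$ elements of an affine subspace cannot do so (under $|K|\ge n-1$, resp.\ $n-2$) requires a structural argument about $S$ that you have not supplied; without it the inductive step does not exist. (ii) The base case $n=s$ --- that an affine subspace of nonsingular alternating $s\times s$ matrices has dimension at most $k(k-1)$ --- is itself a nontrivial theorem (it is the $n=2r$ case of Theorem \ref{thmantisym}, proved over $\R$ in \cite{Ru2} and over general fields, with nontrivial cardinality hypotheses, in \cite{dSP2}); ``normalize to the standard symplectic form and invoke a non-vanishing principle'' is a statement of intent, not a proof, and your hyperplane reduction genuinely degenerates there, as you note. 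So what you have is a correct construction for the lower bound plus a plausible skeleton for the upper bound whose two load-bearing lemmas are missing.
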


Obviously we can ask which are the maximal dimension of affine or linear subspaces with other characteristics, which are not a specified rank: 
in \cite{Ru3} we proved that the maximal dimension of an affine subspace in the set of the nilpotent $n \times n$ matrices over  a field $K$ 
is $ \frac{n(n-1)}{2}$ and, if the characteristic of the field is zero, 
 an affine not linear  subspace in such a set has dimension  less than or equal to $ \frac{n(n-1)}{2}-1$. 
Moreover we proved that
 the maximal dimension of an affine subspace in the set of the normal $n \times n$ matrices is $n$,
the maximal dimension of a linear subspace in the set of the  $(n \times n)$-matrices over  $\R$ which are diagonalizable over $\R$ is  $ \frac{n(n+1)}{2}$, while the 
maximal dimension of an affine not linear subspace  the set of the  $(n \times n)$-matrices over  $\R$ which are diagonalizable over $\R$ is  $ \frac{n(n+1)}{2} -1$.

In this paper we try to answer to the natural question  ``which is the maximal dimension of affine subspaces of matrices whose rank is bounded both below and above?'' and we prove  the following results.

\begin{thm} \label{mxn} Let $r,s, m , n \in \N  $ with $s \leq r \leq \min\{m, n\}$ and let $K$ be  a field with cardinality  greater than or equal to $r+2$ and characteristic different form $2$;
 then
$$ a^K(m \times n; s,r) = 
r \max \{m,n\} - \binom{s+1}{2}.$$ 
\end{thm}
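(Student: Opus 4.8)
The plan is to prove the two matching inequalities separately. By transposing, which preserves both $\dim S$ and the set of ranks attained on $S$, I may assume throughout that $m \le n$, so that $\max\{m,n\}=n$. For the lower bound I would exhibit an explicit member of ${\cal A}^K(m\times n;s,r)$ of the right dimension: splitting the rows into blocks of sizes $s$, $r-s$, $m-r$ and the columns into blocks of sizes $s$, $n-s$, let $S$ consist of all matrices $\begin{pmatrix} L & B \\ C & D \\ 0 & 0\end{pmatrix}$ with $L$ lower-triangular with $1$'s on the diagonal and $B,C,D$ arbitrary. A direct count gives $\dim S = \binom{s}{2}+s(n-s)+(r-s)n = rn-\binom{s+1}{2}$. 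Each such matrix has at most $r$ nonzero rows, hence rank $\le r$, while its invertible top-left block $L$ forces rank $\ge s$; the extreme values $s$ and $r$ are attained (take $L=I_s$ with the other blocks zero, respectively complete the top-left corner to $I_r$), so $S \in {\cal A}^K(m\times n;s,r)$ and $a^K(m\times n;s,r)\ge rn-\binom{s+1}{2}$.

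For the upper bound, write $S = A_0 + V$ with $V$ a linear subspace and $A_0 \in S$ of minimal rank $s$; multiplying $S$ on both sides by fixed invertible matrices changes neither $\dim S$ nor the attained ranks, so I may take $A_0 = \begin{pmatrix} I_s & 0 \\ 0 & 0\end{pmatrix}$. Write each $X \in V$ in the corresponding block form, with top-left block $X_1 \in K^{s\times s}$ and bottom-right block $X_4 \in K^{(m-s)\times(n-s)}$. The first step is the polynomial lemma that every such $X_4$ has rank $\le r-s$. Since $V$ is linear, $A_0 + tX$ has rank $\le r$ for every $t \in K$; in any $(r+1)\times(r+1)$ minor built from the first $s$ rows and columns together with $r-s+1$ further rows and columns, one factors $t^{\,r-s+1}$ out of the last $r-s+1$ rows, and the cofactor evaluated at $t=0$ is the corresponding $(r-s+1)$-minor of $X_4$. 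This minor is a polynomial in $t$ of degree $\le r+1$ vanishing on all of $K$, hence identically zero because $|K|\ge r+2$; therefore $X_4$ has rank $\le r-s$. Thus $X \mapsto X_4$ carries $V$ onto a linear space of matrices of rank $\le r-s$ in $K^{(m-s)\times(n-s)}$, of dimension at most $(r-s)(n-s)$ by Theorem \ref{FladSP}.

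It remains to bound $W = \{X\in V : X_4 = 0\}$, since $\dim S = \dim W + \dim\{X_4 : X\in V\}$. On $A_0 + W$ every matrix has the shape $\begin{pmatrix} I_s + X_1 & X_2 \\ X_3 & 0\end{pmatrix}$ with rank in $[s,r]$, so this is again a bounded-rank problem, now with a forced-zero corner. Two mechanisms cut the dimension of $W$ below that of its ambient space $s(m+n-s)$: a Flanders-type argument bounds the contribution of the off-diagonal blocks $X_2,X_3$, while the minimality of $s$ constrains the top-left block $X_1$. For the latter, following lines $A_0 + tX$ along which $I_s + tX_1$ degenerates and inspecting the second-order term of the Schur complement produces a symmetric bilinear obstruction on the $s\times s$ data, which confines the projection of $W$ to the top-left block to a space of dimension at most $\binom{s}{2}=s^2-\binom{s+1}{2}$, i.e. forces it to miss the $\binom{s+1}{2}$-dimensional space of symmetric matrices. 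The arithmetic then works out exactly: these bounds combine to $\dim W \le s(r+n-s)-\binom{s+1}{2}$, and adding the estimate of the previous step gives $\dim S \le rn-\binom{s+1}{2}$.

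The main obstacle is precisely this last step. Isolating the sharp constant $\binom{s+1}{2}$ — rather than the weaker bound one gets from applying Theorem \ref{FladSP} crudely to the off-diagonal part — requires identifying the space of symmetric $s\times s$ matrices as the genuine obstruction to lowering the rank below $s$, the same phenomenon underlying de Seguins Pazzis's Theorem \ref{dSP>=}; it is exactly here that $\mathrm{char}\,K\ne 2$ is needed, to keep the relevant quadratic form nondegenerate and to ensure the symmetric obstruction has full dimension $\binom{s+1}{2}$. I would expect the cleanest write-up to carry out the analysis of $W$ by induction (say on $s$, with base case $s=0$ supplied by Theorem \ref{FladSP}), the minor lemma furnishing the inductive reduction and the symmetric obstruction accounting for the passage from $\binom{s}{2}$ to $\binom{s+1}{2}$ at each step.
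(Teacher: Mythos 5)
Your lower-bound construction and your first reduction step are both correct: anchoring $S$ at a point $A_0$ of \emph{minimal} rank $s$, the minor argument (factoring $t^{r-s+1}$ out of an $(r+1)\times(r+1)$ minor and using $|K|\ge r+2$) does show that every $X\in V$ has $\rk(X_4)\le r-s$, and Theorem \ref{FladSP} then bounds the image of the $X_4$-projection by $(r-s)(n-s)$. But the final step, which you yourself flag as the main obstacle, is not a proof, and the claims you sketch for it are actually false. First, the projection of $W=\{X\in V: X_4=0\}$ to the top-left block need \emph{not} have dimension $\le\binom{s}{2}$: with $s=1$, $r=m=n=2$, the space $W=\{(x_1,x_2,x_3)=(a,b,a)\}$ satisfies all constraints (every $\bp 1+a & b\\ a & 0\ep$ has rank in $[1,2]$) yet projects onto the full top-left block. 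Second, and fatally, the intrinsic bound $\dim W\le s(r+n-s)-\binom{s+1}{2}$ is false: take $s=1$, $r=2$, $m=n=3$, and $W=\{x_1=x_3\}$ inside the five-dimensional space of matrices with $X_4=0$; every matrix $\bp 1+a & b & c\\ a & 0 & 0\\ d & 0 & 0\ep$ automatically has rank in $[1,2]$ (the last two rows are proportional), so $\dim W=4$, whereas your formula demands $\le 3$. Hence the decomposition $\dim V=\dim W+\dim\{X_4: X\in V\}$ with each summand bounded \emph{separately by its own constraints} cannot yield the sharp bound: the rank conditions couple the two pieces (an element of $V$ with $X_4\ne 0$ restricts which $W$ can coexist with it), and your proposal has no mechanism to exploit that coupling.

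The paper avoids this trap by anchoring at a point $G$ of \emph{maximal} rank $r$ instead. Then the same polynomial trick forces the bottom-right $(m-r)\times(n-r)$ corner of every $v\in V$ to vanish identically (not merely to have bounded rank); the off-diagonal blocks are controlled by showing that each projection $\pi_{i,j}(V)\subseteq K^{2r}$ is totally isotropic for the nondegenerate quadratic form $\sum_i x_ix_{r+i}$ (read off from the degree-$2$ coefficient of a bordered determinant via Schur's lemma), giving $\dim\pi(V)\le(n-r)r$; and the $\binom{s+1}{2}$ saving comes from applying Theorem \ref{dSP>=} to the affine space $I_r+U$ in the top-left $r\times r$ block, where $U$ is the part of $V$ supported there. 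Crucially, the interaction between the blocks is handled not by adding separate bounds but by choosing complements $Z$ (to $\pi(V)$ in the off-diagonal space) and $\tilde W$ (to $W\cap V$ in the top-left space) and proving $V\cap(Z\oplus\tilde W)=\{0\}$, so that $\dim V\le\dim P-\dim Z-\dim\tilde W$. If you want to salvage your minimal-rank anchoring, you would need an analogous complement construction tying the $X_4$-part to $W$; as written, the argument has a genuine gap exactly where the sharp constant is supposed to emerge.
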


Observe that the theorem above generalizes Theorem \ref{mio2} in two directions: both because the field we consider is not  only $\R$  and because the rank is not constant but in a range. 
Moreover, observe that, if  we take $r= \min\{m,n\}$ in the  theorem above,  we get Theorem \ref{dSP>=}, while, if we take $s=0$, we get Theorem \ref{FladSP} with more strict assumptions on the field.

\begin{thm} \label{anti} 
Let $r,s, n \in \N  $ with 
 $r,s $ even  and $s \leq r \leq n$ and let $K$ be  a field with cardinality  greater than or equal to $r+2$ and characteristic different form $2$; then
$$a_{ant}^K( n; s,r) \leq   (n-1) \frac{r}{2} - \frac{s^2}{4}
 .$$ 
\end{thm}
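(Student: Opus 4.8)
The plan is to reduce to a minimal-rank element in canonical form and then split the direction space into three blocks, extracting the leading term $(n-1)\tfrac r2$ from a Flanders--Schur analysis of the upper bound $r$ and the correction $-\tfrac{s^2}{4}$ from the rank-bounded-below Theorem \ref{dSPant2}. First I would use that congruence $A\mapsto P^{T}AP$ with $P\in GL_n(K)$ preserves antisymmetry, rank and $\dim S$, and hence maps ${\cal A}_{ant}^K(n;s,r)$ to itself. Since the characteristic is different from $2$, every antisymmetric matrix is congruent to a block-diagonal sum of $2\times 2$ symplectic blocks, so I may choose $M_0\in S$ with $rk(M_0)=s$ and, after a congruence, assume
$$M_0=\begin{pmatrix} J & 0 \\ 0 & 0\end{pmatrix},\qquad J=\text{the standard } s\times s \text{ symplectic block.}$$
Writing $S=M_0+V$ with $V$ linear of dimension $\dim S$, I decompose each $A\in V$ as $A=\begin{pmatrix} A_{11} & A_{12} \\ -A_{12}^{\,T} & A_{22}\end{pmatrix}$, with $A_{11}\in K_a^{s\times s}$, $A_{12}\in K^{s\times(n-s)}$ and $A_{22}\in K_a^{(n-s)\times(n-s)}$.

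Next, for each fixed $A\in V$ and each $t\in K$ the matrix $M_0+tA$ lies in $S$ and so has rank in $[s,r]$; for all but finitely many $t$ the block $J+tA_{11}$ is invertible, and here the hypothesis $|K|\ge r+2$ guarantees enough admissible parameters $t$ to run a Schur-complement and polynomial argument. Taking the Schur complement of the top-left corner, the constraint $rk(M_0+tA)\le r$ forces the antisymmetric matrix $tA_{22}+t^{2}A_{12}^{T}(J+tA_{11})^{-1}A_{12}$ to have rank $\le r-s$ for all admissible $t$, and letting $t\to 0$ yields $rk(A_{22})\le r-s$. Hence the linear projection $\pi_{22}\colon V\to K_a^{(n-s)\times(n-s)}$, $A\mapsto A_{22}$, has image contained in the variety of antisymmetric matrices of rank $\le r-s$; being a linear space contained in that variety, its dimension is at most $(n-s-1)\tfrac{r-s}{2}$ by induction (the $s=0$ instance of the theorem in size $n-s$).

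It then remains to bound $\ker\pi_{22}=\{A\in V:A_{22}=0\}$, the elements $\begin{pmatrix} A_{11} & A_{12} \\ -A_{12}^{T} & 0\end{pmatrix}$ of $V$. Here the minimality of $rk(M_0)=s$ is the essential extra input: it forces a genuine rank-bounded-below condition on the $s\times s$ part, to which Theorem \ref{dSPant2} in size $s$ applies, contributing a term of the shape $\binom{s}{2}-\tfrac{s^2}{4}$ and so producing the $-\tfrac{s^2}{4}$. The off-diagonal block $A_{12}$ must then be controlled through the full Schur-complement coupling in $t$ (to all orders), which ties $A_{12}$ to $A_{22}$; this is what trims the naive estimate $s(n-s)$ down to what is needed. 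Summing the three block contributions and simplifying gives the claimed bound $(n-1)\tfrac r2-\tfrac{s^2}{4}$.

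I expect the main obstacle to be exactly this last coupling. A block-by-block estimate treating $A_{11}$, $A_{12}$, $A_{22}$ independently overcounts: using the sharp bounds $(n-s-1)\tfrac{r-s}{2}$ for $A_{22}$, $\binom{s}{2}-\tfrac{s^2}{4}$ for $A_{11}$, and the free value $s(n-s)$ for $A_{12}$, one checks the surplus is $\tfrac{s(n-r)}{2}$. The delicate step is therefore to invoke the higher-order Schur-complement relations linking $A_{12}$ to $A_{22}$, together with the minimality of $s$, to remove this surplus. Organizing the argument as an induction on $n$ that peels off one symplectic pair, so that $(n,s,r)\mapsto(n-2,s-2,r-2)$, with Theorem \ref{dSPant2} and the constant-rank Theorem \ref{thmantisym} serving as base cases, is the natural way to keep the bookkeeping honest; the hypotheses $\mathrm{char}\,K\ne 2$ and $|K|\ge r+2$ enter respectively through the congruence normal form and through the availability of enough parameter values $t$ in the rank and polynomial arguments.
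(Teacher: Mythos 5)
Your plan has a genuine gap, and in fact you name it yourself without closing it: after anchoring at a \emph{minimum}-rank element and estimating the three blocks, you are left with a surplus of $\tfrac{s(n-r)}{2}$ over the claimed bound, and the ``higher-order Schur-complement relations linking $A_{12}$ to $A_{22}$'' that are supposed to remove it are never exhibited. That coupling is the whole content of the theorem, so the proposal is a proof sketch with its central step missing. The paper sidesteps the problem entirely by anchoring at a \emph{maximum}-rank element: writing $S=G+V$ with $rk(G)=r$ and $G$ congruent to $\mathrm{diag}(d_1,d_1,\ldots,d_{r/2},d_{r/2},1,\ldots,1)\begin{pmatrix}\overline{J}_r&0\\0&0\end{pmatrix}$, one first shows $V$ has zero lower-right $(n-r)\times(n-r)$ block (the coefficient of $t$ in a suitable $(r+1)\times(r+1)$ minor of $G+tv$ is $d_1^2\cdots d_{r/2}^2 v_{i,j}$, and the minor has degree at most $r+1$, so $|K|\ge r+2$ suffices); then each pair of columns of the off-diagonal block lies in an isotropic subspace of $K^{2r}$ for a nondegenerate quadratic form (again a degree-$\le r+1$ argument), and Lemma \ref{lemma3} converts these pairwise bounds into $\dim(\pi(V))\le \tfrac{r}{2}(n-r)$; finally the $r\times r$ corner is bounded by $\tfrac{r(r-1)}{2}-\tfrac{s^2}{4}$ via Theorem \ref{dSPant2} applied in size $r$ to the affine space $\gamma+U$. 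These three pieces sum exactly to $(n-1)\tfrac r2-\tfrac{s^2}{4}$, with no surplus to explain away.

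Two further steps in your sketch fail as stated. First, ``letting $t\to 0$'' to conclude $rk(A_{22})\le r-s$ is unavailable over a general field: after clearing the denominator $\det(J+tA_{11})$, the relevant $(r-s+1)$-minors become polynomials in $t$ of degree up to roughly $(s+1)(r-s+1)$, which can far exceed the number of admissible values of $t$ guaranteed by $|K|\ge r+2$ (e.g.\ $s=2$, $r=6$ gives degree up to $15$ against at most $|K|-s$ points), so the vanishing at $t=0$ does not follow; this is exactly why the paper keeps all its polynomial arguments at degree $\le r+1$. Second, the minimality of $rk(M_0)=s$ does \emph{not} force a rank-$\ge s$ condition on the $s\times s$ corner of elements of $\ker\pi_{22}$: for $A\in V$ with $A_{22}=0$, the rank of $M_0+A$ can be carried entirely by the off-diagonal block (take $A_{11}=-J$ and $A_{12}$ of rank $\ge s/2$), so Theorem \ref{dSPant2} in size $s$ does not apply to $\{J+A_{11}\}$. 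The paper's invocation of that theorem is legitimate precisely because elements of $W\cap V$ are supported entirely in the $r\times r$ corner, so the rank hypothesis on $S$ transfers verbatim to the affine space $\gamma+U$.
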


Observe that if we take $r=n$ in the theorem above we get  the bound of Theorem
\ref{dSPant2}.

\begin{thm} \label{echel} Let $s,r, m,n \in \N  $ with $ s \leq r \leq \min\{m,n\}$ and let $K$ be  a field with  $|K| \geq r+1$;
 then
$$ \begin{array}{c}a^K_{ech}(m \times n;s, r) = 
r n - \frac{r(r+1)}{2} \;\;\;\;   \mbox{if} \;\;  s=r ,
\vspace*{0.3cm}
\\
sn - \frac{s(s+1)}{2}+ n-s
  \leq a^K_{ech}(m \times n;s, r) \leq 
r n - \frac{r(r+1)}{2}+1 \;\; \;\;   \mbox{if}
 \;\;  s<r.
 \end{array}
$$ 
\end{thm}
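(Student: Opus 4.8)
The whole argument rests on the elementary remark that a matrix in row echelon form has rank equal to its number of nonzero rows, and that these nonzero rows are exactly the top ones. In particular, since $\max_{A\in S}rk(A)=r$, every $A\in S$ has its rows $r+1,\dots,m$ equal to zero, so these entries vanish identically on $S$ and I may and do regard $S$ as an affine subspace of $K^{r\times n}$. I will also use the following lemma, which drives the upper bound: \emph{if $V\subseteq K^{r\times n}$ is a linear subspace all of whose elements are in row echelon form, then $\dim V\le n$.} Indeed, the projection $\phi$ onto the first row is linear, and if $M\in V$ has $\phi(M)=0$ then, being echelon, $M$ can have no nonzero row below its (zero) first row, so $M=0$; thus $\phi$ is injective and $\dim V\le\dim K^n=n$.

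\emph{Lower bound.} I build the extremal family explicitly. In rows $1,\dots,s$ I place a fixed nonzero pivot in position $(i,i)$ and leave the entries to its right free; this is an affine space of dimension $\sum_{i=1}^{s}(n-i)=sn-\binom{s+1}{2}$, it is in echelon form and forces at least $s$ nonzero rows. To rows $s+1,\dots,r$ I attach, in the columns $s+1,\dots,n$, the banded (diagonal-constant) block whose row $s+k$ is $(0,\dots,0,y_1,y_2,\dots)$ with $k-1$ leading zeros, depending on parameters $y_1,\dots,y_{n-s}$. This block is echelon for every value of $y$, equals $0$ exactly when $y=0$, and has $r-s$ nonzero rows as soon as $y_1\ne0$. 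The resulting affine subspace is everywhere in echelon form, has $rk$ ranging exactly over $[s,r]$, and has dimension $sn-\binom{s+1}{2}+(n-s)$. For $s=r$ one keeps only the top block, getting dimension $rn-\binom{r+1}{2}$ at constant rank $r$.

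\emph{Upper bound.} Fix $A_{\min}\in S$ of rank $s$; its nonzero rows are $1,\dots,s$, say $v_1^0,\dots,v_s^0$, with pivots in columns $c_1<\dots<c_s$ (so $c_k\ge k$), and its remaining rows are zero. I peel the rows one at a time along $A_{\min}$. Because $\min_{A\in S}rk(A)=s$, each of the rows $1,\dots,s$ is nonzero throughout $S$; hence, having fixed rows $1,\dots,k-1$ to $v_1^0,\dots,v_{k-1}^0$, the image of the $k$-th row lies in the space of vectors supported in columns $>c_{k-1}$ and avoids $0$, so it has dimension at most $(n-c_{k-1})-1\le n-k$. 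After $s$ steps the remaining fiber $F$ consists of the blocks made of rows $s+1,\dots,r$ (supported in columns $>c_s$), with the top $s$ rows fixed to those of $A_{\min}$. Since $A_{\min}$ itself, whose bottom block is $0$, lies in $F$, the fiber $F$ contains the zero block and is therefore a \emph{linear} space of echelon blocks; the Lemma gives $\dim F\le n-c_s\le n-s$. As all fibers of a coordinate projection restricted to an affine space share one dimension, summing the contributions yields
$$\dim S\ \le\ \sum_{k=1}^{s}(n-k)+(n-s)\ =\ sn-\binom{s+1}{2}+(n-s)\ \le\ rn-\binom{r+1}{2}+1 .$$
For $s=r$ one peels all $r$ rows, there is no leftover block, and $\dim S\le\sum_{k=1}^{r}(n-k)=rn-\binom{r+1}{2}$, matching the construction.

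\emph{Main obstacle.} The only genuinely delicate point is the passage from affine to linear: $S$ is merely affine, yet the terminal fiber $F$ becomes a linear space — which is exactly what makes the Lemma applicable — precisely because the minimal rank $s$ is attained inside $F$. Everything else is bookkeeping with the pivots $c_k\ge k$ of the single matrix $A_{\min}$, so essentially no genericity or field-size assumption is needed here. I note that this peeling scheme in fact produces the value of the lower bound as an upper bound, so that, beyond the stated inequality $a^K_{ech}(m\times n;s,r)\le rn-\binom{r+1}{2}+1$, it suggests the sharper equality $a^K_{ech}(m\times n;s,r)=sn-\binom{s+1}{2}+(n-s)$ for $s<r$; I would double-check this strengthening against the definition of row echelon form in use before claiming it.
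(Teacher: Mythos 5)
Your proposal is correct, and it takes a genuinely different route from the paper's. The paper first proves (Proposition \ref{preechel}, which is where the hypothesis $|K|\ge r+1$ enters) that a single matrix $H\in S$ has all $r$ pivots nonzero simultaneously, normalizes $H$ by simultaneous column operations so that only its pivots survive, and then applies Grassmann's formula inside the ambient space $W$ of matrices compatible with the pivot pattern: against the $r$-dimensional span $D$ of the pivot cells when $s=r$ (killing a pivot would drop the rank below $r$), and against the $(r-1)$-dimensional $U=\langle E_{1,j_1},\dots,E_{r-1,j_{r-1}}\rangle$ when $s<r$ (killing such a pivot would break echelon form); the loss of one dimension in $U$ is exactly what produces the $+1$ slack in the stated upper bound. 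You instead peel rows along a fixed matrix $A_{\min}$ of minimal rank $s$: since every element of $S$ has rank at least $s$ and is echelon, each of the first $s$ row-images is an affine subset of a coordinate subspace avoiding $0$, hence of dimension at most $n-c_{k-1}-1\le n-k$, and --- the key observation --- the terminal fiber contains the zero block, hence is \emph{linear}, so your injectivity lemma (first-row projection is injective on a linear space of echelon matrices) bounds it by $n-c_s\le n-s$. I checked the delicate points (constant fiber dimension for affine projections, $c_k\ge k$, the fiber blocks being echelon in the columns $>c_s$) and the argument is sound over an arbitrary field. This buys two genuine improvements over the paper: no cardinality assumption on $K$ is needed for the upper bounds (you never need Proposition \ref{preechel}'s simultaneous-pivot matrix, only the trivial fact that rows $r+1,\dots,m$ vanish identically on $S$), and for $s<r$ you obtain the sharper bound $\dim S\le sn-\binom{s+1}{2}+(n-s)$, which matches the lower-bound construction and closes the gap left open in Theorem \ref{echel}; your bound implies the stated one since $rn-\frac{r(r+1)}{2}+1-\left(sn-\frac{s(s+1)}{2}+n-s\right)=\sum_{t=s+2}^{r}(n-t)\ge 0$ for $s<r\le n$. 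In a final write-up, state explicitly the convention $c_0=0$, and the one-line justification that rows $r+1,\dots,m$ vanish on all of $S$ (every element is echelon of rank $\le r$), which underlies both your reduction to $K^{r\times n}$ and, in the case $s=r$, the fact that the terminal fiber is a single point.
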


Finally we point out that,
as to the techniques we use in this paper,   both because of  the generalization  from $\R$ to a more general field and because of  the generalization from constant rank to rank in a range, we had to use 
 more advanced  techniques with respect the ones used in the previous papers (for instance, in the paper [10] we used that the eigenvalues of real symmetric matrices are real and the nonnegativity of a sum of  squares of real numbers). Moreover  
 we emphasize  that 
 we cannot deduce the results of this paper from the previous ones since the bound we get on the maximal dimension of affine subspaces of matrices with rank in a range is not the minimum of the bound
 on the maximal dimension of affine subspaces of matrices with rank bounded below and 
of the bound
 on the maximal dimension of affine subspaces of matrices with rank bounded  above.

\section{Proof of the theorems}

\begin{nota} \label{notaz} Let $m,n \in \mathbb{N} -\{0\} $ and $K$ be a field.

We denote the $n \times n$ identity matrix over $K $ by $I^K_n$ and 
the $m \times n$ null matrix over $K$ by $0^K_{m \times n }$.
Moreover, we denote $E_{i,j}^{n,K}$ the $n \times n$  matrix over $K$ such that 
$$ (E_{i,j}^{n,K})_{x,y} = \left\{ \begin{array}{ll}
1 & \mbox{\rm if} \; (x,y)=(i,j) \\
0 & \mbox{\rm otherwise}
\end{array} \right.$$
 We omit the superscripts when it is clear from the context.
 
 We write $diag (d_1, \ldots, d_n)$ for the diagonal matrix whose diagonal entries are $d_1, \ldots, d_n$.
 
For any $A \in  K^{m \times n}$, the submatrix of $A$ given by the rows $i_1, , \ldots , i_k$ and the columns $j_1, \ldots, j_s$  will be denoted by $A^{(j_1, \ldots, j_s)}_{(i_1, \ldots, i_k)}$.

We define $J$ to be the $(2 \times 2)$-matrix 
$$ \bp 0 & 1 \\ -1 & 0 \ep $$
and we denote by $\overline{J}_{2n}$ the $(2n \times 2n)$ block diagonal matrix whose diagonal blocks are equal to $J$.
We omit the subscript when it is clear from the context.

%For any matrix $A$, $rk(A)$ denotes the rank of $A$.

Let  $A$ and $B$ be two subsets  of $\N$; we write $A<B$ if $a < b$ for any $a \in A$ and $b \in B$.
\end{nota}

We recall a lemma  from \cite{Ru2}; in \cite{Ru2} it was stated only for $K=\R$ but it can be proved on any field with the same proofs.

%\begin{lem} \label{lemma2}
%Let $m,r \in \mathbb{N}-\{0\}$ and let $K$ be a field.
%Let $V$ be vector subspace of $K^{3m}$.

%Let $\pi_1: K^{3m} \rightarrow K^{2m}$ be the projection onto the first  $2m$ coordinates.

%Let $\pi_2: K^{3m} \rightarrow K^{2m} $ be the projection onto the last $2m$ coordinates.

%Let $\pi_3 :  K^{3m} \rightarrow K^{2m}$ be the projection onto the  coordinates $1, \ldots, m, 2m+1, \ldots 3m$.

%If $\dim (\pi_i (V) ) \leq 2r $ for $i=1,2,3$, then $\dim(V) \leq 3r$.
%\end{lem}

\begin{lem} \label{lemma3}
Let $n_1, \ldots, n_k, q_1, \ldots, q_k, m, r \in \N$ and let $K$ be a field.
Let $h=3m + n_1+ \ldots+ n_k$.

Let $\pi_1: K^h \rightarrow K^{2m}$ be the projection onto the first 
$2m$ coordinates.

Let $\pi_2: K^h \rightarrow K^{2m} $ be the projection onto the coordinates $m+1, \ldots, 3m$.

Let $\pi_3 :  K^h \rightarrow K^{2m}$ be the projection 
onto the  coordinates $1, \ldots, m, 2m+1, \ldots 3m$.

Finally, let $p_1 : K^h \rightarrow 
K^{n_1}$ be the projection onto the coordinates $3m+1, \ldots, 3m + n_1$, 
 let $p_2 : K^h \rightarrow 
K^{n_2}$ be the projection onto the coordinates $3m + n_1+1, \ldots, 3m +n_1+n_2$ and so on.

Let $V$ be a vector subspace  of $K^h$ such that $\dim(\pi_i(V)) \leq 2r $ for $i=1,2,3$
and $\dim(p_j (V)) \leq q_j $ for $j=1, \ldots , k$; then
$$\dim(V) \leq \sum_{j=1, \ldots, k} q_j + 3r.$$
\end{lem}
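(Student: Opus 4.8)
The plan is to separate the ``top'' $3m$ coordinates from the ``tail'' coordinates and to treat each part with a different argument. Write $K^h = K^{3m} \oplus K^{n_1} \oplus \cdots \oplus K^{n_k}$, split the first block as $K^{3m} = A \oplus B \oplus C$ with $A,B,C \cong K^m$ the coordinate groups $\{1,\dots,m\}$, $\{m+1,\dots,2m\}$, $\{2m+1,\dots,3m\}$, and let $\pi_W : K^h \to K^{3m}$ be the projection onto the first $3m$ coordinates. The point is that $\pi_1,\pi_2,\pi_3$ are precisely the three projections of $K^{3m}$ onto the pairs $A\oplus B$, $B\oplus C$, $A\oplus C$, each precomposed with $\pi_W$; hence the hypotheses $\dim \pi_i(V) \le 2r$ become hypotheses on the three pairwise projections of $U:=\pi_W(V)$. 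Using the rank--nullity identity
$$\dim V = \dim \pi_W(V) + \dim\bigl(V \cap \ker \pi_W\bigr),$$
it suffices to bound the two summands by $3r$ and $\sum_j q_j$ respectively.

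For the tail term, observe that $V \cap \ker \pi_W$ is a subspace of $\{0\}^{3m} \oplus K^{n_1} \oplus \cdots \oplus K^{n_k}$, on which the map $v \mapsto (p_1(v),\dots,p_k(v))$ is injective, being the identity restricted to that subspace. Therefore
$$\dim\bigl(V \cap \ker \pi_W\bigr) \le \sum_{j=1}^k \dim p_j\bigl(V \cap \ker\pi_W\bigr) \le \sum_{j=1}^k \dim p_j(V) \le \sum_{j=1}^k q_j,$$
which disposes of this part with no real difficulty.

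The heart of the argument is the bound $\dim U \le 3r$, which I would isolate as a self-contained claim: \emph{if $U \subseteq A\oplus B\oplus C$ is a subspace whose three pairwise projections each have dimension at most $2r$, then $\dim U \le 3r$}. To prove it, set $U_A = U \cap A$, $U_B = U \cap B$, $U_C = U \cap C$ (intersections inside $K^{3m}$) and write $d = \dim U$. The kernel of the projection $U \to A\oplus B$ is exactly $U_C$, so $d = \dim\pi_{A\oplus B}(U) + \dim U_C \le 2r + \dim U_C$, and symmetrically $d \le 2r + \dim U_A$ and $d \le 2r + \dim U_B$. On the other hand $U_A, U_B, U_C$ sit in the three independent coordinate blocks, so $U_A \oplus U_B \oplus U_C \subseteq U$ forces $\dim U_A + \dim U_B + \dim U_C \le d$. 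Adding the three lower bounds $\dim U_A, \dim U_B, \dim U_C \ge d - 2r$ yields $3(d-2r) \le d$, i.e. $d \le 3r$ when $d \ge 2r$, while $d < 2r \le 3r$ is trivial. Combining this claim with the tail estimate gives $\dim V \le \sum_j q_j + 3r$. The main obstacle is spotting the right three ``diagonal'' subspaces $U_A, U_B, U_C$ and balancing the lower bounds forced by the pairwise-projection hypotheses against the upper bound forced by their internal directness; once this is in place, the remainder is routine dimension counting.
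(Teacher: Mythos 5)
Your proof is correct. A point of context first: the paper itself contains no proof of Lemma~\ref{lemma3} --- it recalls the lemma from \cite{Ru2}, remarking only that the proof given there for $K=\R$ works verbatim over any field --- so there is no in-paper argument to compare against, and your write-up serves as a complete self-contained substitute. Your structure is sound: the rank--nullity splitting $\dim V = \dim \pi_W(V) + \dim(V\cap\ker\pi_W)$ is valid, the identification of $\pi_1,\pi_2,\pi_3$ with the three pairwise-block projections of $K^{3m}$ precomposed with $\pi_W$ is exactly right (they ignore the tail coordinates), and the tail estimate via injectivity of $v\mapsto(p_1(v),\dots,p_k(v))$ on $\ker\pi_W$ is correct. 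The core claim is also proved correctly: for each block $X\in\{A,B,C\}$, the kernel of the projection of $U$ onto the complementary pair of blocks is exactly $U\cap X$, so rank--nullity gives $\dim(U\cap X)\ge \dim U - 2r$; since $U\cap A$, $U\cap B$, $U\cap C$ lie in independent coordinate blocks, their direct sum sits inside $U$, and summing the three lower bounds yields $3(\dim U - 2r)\le \dim U$, i.e.\ $\dim U\le 3r$. One small simplification: your case split between $\dim U\ge 2r$ and $\dim U<2r$ is unnecessary, since the inequality $\dim(U\cap X)\ge \dim U-2r$ holds trivially (the left side is nonnegative) when $\dim U<2r$, so the summation argument goes through uniformly. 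It is worth noting that your argument in fact establishes the slightly stronger Shearer-type inequality $2\dim U\le \dim\pi_{AB}(U)+\dim\pi_{BC}(U)+\dim\pi_{AC}(U)$ for any subspace $U\subseteq A\oplus B\oplus C$, valid over an arbitrary field, which is precisely the strength needed to get the coefficient $3r$ rather than the $4r$ that a naive one-kernel-at-a-time estimate would produce.
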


%We recall also the so called Generalized Gerstenhaber Theorem, see  Theorem 9 in \cite{dSP} by de Seguins Pazzis: 

%\begin{thm} [Generalized Gerstenhaber Theorem,  \cite{dSP}]  \label{GG}Let $K$ be a field and  $n \in \N$; let $V$ be a vector subspace of   $K^{n \times n}$  such that, for every $A \in V$, the spectrum  of $A$  over $K$ is contained in $\{0\}$.Then $\dim(V) \leq \binom{n}{2}$.  
%\end{thm}

Finally, we recall Schur's Lemma:

\begin{lem} [Schur's Lemma] \label{Schur} Let $K$ be a field and $m,n \in \N-\{0\}$.
Let $A \in K^{n \times n}$, $D \in K^{m \times m}$, $  C \in  K^{m \times n} $, $  B \in  K^{n \times m}$; if $A$ is invertible, then 
$$ \det \bp A & B \\ C & D \ep = \det(A) \det (D - C A^{-1} B);$$ 
if $D$ is invertible, then 
$$ \det \bp A & B \\ C & D \ep = \det(D) \det (A - B D^{-1} C).$$ 

\end{lem}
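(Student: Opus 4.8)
The plan is to prove both identities by exhibiting an explicit block triangular factorization of $\bp A & B \\ C & D \ep$ and then invoking two elementary facts: that the determinant is multiplicative, and that the determinant of a block triangular matrix equals the product of the determinants of its diagonal blocks.

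First I would record the auxiliary fact about block triangular determinants. For square blocks $P \in K^{n \times n}$, $Q \in K^{m \times m}$ and any $R$ of compatible size,
$$\det \bp P & R \\ 0 & Q \ep = \det(P)\det(Q), \qquad \det \bp P & 0 \\ R & Q \ep = \det(P)\det(Q).$$
This follows from the Leibniz expansion of the determinant: because the lower-left (respectively upper-right) block vanishes, a permutation can contribute a nonzero term only if it maps the first $n$ indices among themselves and the last $m$ indices among themselves, and summing over exactly these permutations factors the expression as the product of the two determinants.

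Next, assuming $A$ invertible, I would write down the factorization
$$\bp A & B \\ C & D \ep = \bp I_n & 0 \\ C A^{-1} & I_m \ep \bp A & B \\ 0 & D - C A^{-1} B \ep,$$
verified by a direct block multiplication, the only nontrivial entry being the bottom-right one, which gives $C A^{-1} B + (D - C A^{-1} B) = D$. Taking determinants and using multiplicativity together with the auxiliary fact, the first factor is block lower triangular with identity diagonal blocks and hence has determinant $1$, so $\det \bp A & B \\ C & D \ep = \det(A)\det(D - C A^{-1} B)$. For the second identity, assuming $D$ invertible, I would use the symmetric factorization
$$\bp A & B \\ C & D \ep = \bp A - B D^{-1} C & B \\ 0 & D \ep \bp I_n & 0 \\ D^{-1} C & I_m \ep,$$
again checking the four blocks directly and concluding in the same way, since now the second factor has determinant $1$ and the first is block upper triangular.

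I do not expect any genuine obstacle: the argument is a routine computation. The only points requiring care are the correct placement of $A^{-1}$ (respectively $D^{-1}$) so that the relevant off-diagonal block cancels and the Schur complement $D - C A^{-1} B$ (respectively $A - B D^{-1} C$) appears on the diagonal, and the verification of the block triangular determinant identity; both are settled once the two factorizations above are written down.
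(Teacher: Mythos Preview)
Your argument is correct: the two block factorizations you write down are valid (the block multiplications check out), and combined with multiplicativity of the determinant and the block-triangular determinant identity they yield exactly the stated formulas. The paper itself does not prove this lemma; it merely recalls Schur's Lemma as a standard fact, so there is nothing further to compare.
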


\begin{proof}[Proof of Theorem   \ref{mxn}]
Let us suppose $ m \leq n$.
To prove the inequality 
$$ a^K(m \times n; s,r) \geq 
r n - \binom{s+1}{2},$$ 
consider the following affine subspace:
$$S=\left\{   
  C \in  K^{m \times n}  \; | \; \begin{array}{l} C_{i,j}=0 \; \mbox{\rm for  } i,j \in \{1, \ldots , s\} \;  \mbox{\rm with } i >j \\ 
  C_{i,i}=1 \; \mbox{\rm for  } i=1, \ldots , s, \;\; 
    C_{i,j}=0 \; \mbox{\rm for  } i >r  
  \end{array}
 \right\}.$$
We can see easily that $s $ is the minimum of $\{rk(C)|\; C \in S\}$ and $r$ is the maximum  and that  the dimension of $S$ is $
r n - \binom{s+1}{2}.$

Now let us prove the other inequality.
Let $S \in {\cal A}^K(m \times n; s,r)$; we prove that $\dim (S) \leq 
r n - \binom{s+1}{2}$.

We can write  $ S=  G+V,$ where $rk(G)=r$ and
 $V$ is a linear subspace of  $K^{m \times n}$.
We can suppose  
$G =\bp 
 I_{r} & 0  \\ 0 & 0 
   \ep   \in  K^{m \times n}$.
Let $$P= \{C   \in   K^{m \times n}  | \; C_{i,j}=0 \; \mbox{\rm if } i > r \; \mbox{\rm and } j  >r \}.$$

First let us prove that 
\begin{equation} \label{w=0}
 V\subseteq P
. \end{equation}
Let   $v \in V$. Let  $i \in \{r+1, \ldots, m \} $ and 
 $j \in \{r+1, \ldots, n \} $. We have that 
$ \det \left( (G + t v )^{(1,\ldots, r, j)}_{(1,\ldots, r, i)} \right)$ is a polynomial in $t$ with coefficient of the term of degree $1$ equal to $v_{i,j} $ and degree at most $r+1$;
since $|K| \geq r+2$ we must have $v_{i,j}=0$ (otherwise 
  there would exist $t$ such that 
$ \det \left( (G + t v )^{(1,\ldots, r, j)}_{(1,\ldots, r, i)} \right)  \neq 0$, which is contrary to our assumption on $S$). 
 Hence we  have proved (\ref{w=0}).

Let $$Q=  \{C   \in P | \;    C_{i,j}=0  \; \mbox{\rm if } \;i \leq r \; \mbox{\rm and }
j  \leq r  \}.$$
Obviously $\dim(Q)= r(m-r)+ r(n-r)$. 
 Let $$\pi : P \longrightarrow Q$$ be the map 
 $$ \bp  A & B & X \\ C & 0  & 0 \ep  \longmapsto  \bp  0 & B & X\\ C & 0 & 0  \ep  ,  
$$ 
where $A $ is $r \times r$ and $B$ is $r \times (m-r)$.
 
 For any $i , j \in \{1, \ldots , m-r\}$,
 let $$\pi_{i,j} : P \longrightarrow K^{2r}$$ be the map 
 $$ \bp  A & B & X \\ C & 0 & 0 \ep  \longmapsto  \bp  B^{(j)} \\ {}^t (C_{(i)})  \ep  ,  
$$ 
where $A $ is $r \times r$ and $B$ is $r \times (m-r)$.

Finally, if $n >m $,  let $$p :  P \longrightarrow K^{r \times (n-m) } $$ be the map 
 $$ \bp  A & B & X \\ C & 0 & 0 \ep  \longmapsto  X,  
$$ 
where, again, $A $ is $r \times r$ and $B$ is $r \times (m-r)$.

Since, for any $ \bp  A & B & X \\ C & 0 & 0 \ep  \in V,  
$ we have that  $\det \bp I_r + t A    & t B^{(j)}  \\ t C_{(i)}  & 0 \ep $ is a polynomial in $t$ with coefficient of the term of degree $2$ equal to $- C_{(i)} B^{(j)}   $ 
(in fact the term of degree $2$  is equal to  $\det \bp I_r    & t B^{(j)}  \\  tC_{(i)}  & 0 \ep $)
%, which can calculated  for instance by applying Schur's Lemma, see Lemma \ref{Schur}
and degree at most $r+1$ and since $|K| \geq r+2$, we must have that, for any $i=1, \ldots ,m-r $ and for any $j=  1, \ldots, m-r$, every element of the subspace $\pi_{i,j} (V)= \pi_{i,j} (\pi (V)) $ must be isotropic with respect to the nondegenerate quadratic form on $K^{2r}$ defined by $\sum_{i=1, \ldots, r}
 x_i x_{r+i}$, hence $\dim ( \pi_{i,j} (\pi (V))) \leq r $.
 
 Since
 $$ \pi(V) \subset \pi_{1,1} (\pi(V)) + \ldots +  \pi_{m-r,m-r} (\pi(V)) + p(\pi(V)), $$ 
we get 
$$ \dim (\pi(V)) \leq   \underbrace{r + \ldots +r}_{m-r} + (n-m)r = (m-r)r+ (n-m)r = (n-r)r.$$
Hence there exists a $(m-r) r$-dimensional subspace  $Z$ in $Q$ such that $$\pi(V) \cap Z= \{0\}.$$

Let 
\begin{equation} \label{W}
W =  \{C   \in  K^{m \times n}   | \;    C_{i,j}=0  \; \mbox{\rm if } \;i \geq r+1 \; \mbox{\rm or }
j  \geq r+1  \}
\end{equation}
and let $\tilde{W}$ be a subspace of $W$ such that 
$$ W = (W \cap V) \oplus \tilde{W};$$
hence $V \cap \tilde{W}=\{0\}$.
We state that
\begin{equation}\label{str1} V \cap (Z \oplus \tilde{W})=\{0\}.
\end{equation} 

Let $ \bp  A & 0 & 0 \\ 0 & 0 & 0 \ep  \in \tilde{W} $ and 
 $ \bp  0 & B & X \\ C & 0 & 0 \ep  \in Z$ such that 
$ \bp  A & B & X \\ C & 0 & 0 \ep \in V$, so 
$ \bp  A & B & X \\ C & 0 & 0 \ep \in V \cap    (Z \oplus \tilde{W}) $;
therefore  $$ \bp  0 & B & X \\ C & 0 & 0 \ep = \pi  \bp  A & B & X \\ C & 0 & 0 \ep  \in \pi(V) \cap Z =\{0\},$$ so $B=0$, $X=0$, $C=0$; hence $
 \bp  A & 0 & 0 \\ 0 & 0 & 0 \ep \in V \cap \tilde{W} =\{0\}$, so $A=0$.

From (\ref{w=0}) and (\ref{str1}) we get $$ \begin{array}{l}
\dim(V) \leq \dim(P) - \dim(Z) - \dim(\tilde{W}) = \\ 
= \dim(P) - \dim(Z) - \dim(W) + \dim(W\cap V)= \\
= mn - (m-r)(n-r) -r (m-r) - r^2 +  \dim(W\cap V) 
 \leq \\ \stackrel{\ast}{\leq} 
mn - (m-r)(n-r) -r (m-r) - r^2 +    r^2 - \binom{s+1}{2}  = \\ = 
rn -  \binom{s+1}{2}  ,
\end{array}
$$
where the inequality $(\ast) $ holds by what follows:

let $$U= \left\{ A \in  K^{r \times r } |\;  \bp A & 0 \\ 0 & 0  \ep \in W \cap V\right\}$$
  $$\dim (W \cap V) = \dim(U) = \dim (I_r + U)  \leq  
r^2 - \binom{s+1}{2},$$ where the inequality holds
 by Theorem \ref{dSP>=}.

\end{proof}

\begin{proof}[Proof of Theorem   
\ref{anti}]
%Let us prove the  inequality
%$$a_{ant}^K( n; s,r)  \leq    (n-1) \frac{r}{2} - \frac{s^2}{4}
 %$$ 
Let $S \in {\cal A}_{ant}^K( n; s,r) $; we prove that $\dim (S) \leq  (n-1) \frac{r}{2} - \frac{s^2}{4} $. 

We can write $S$ as
  $M+ V$ where $M \in K_a^{n \times n}$, $rk(M) = r$  and $V$ 
  is a linear subspace of $K_a^{n \times n}$. 
Let  $H$ be an invertible matrix such that ${}^t H M H$ is  a   matrix  $G$  of the kind $$diag(d_1, d_1, d_2, d_2, \ldots, d_{\frac{r}{2}}, d_{\frac{r}{2}}, 1, \ldots, 1) 
\bp 
 \overline{J}_{r} & 0  \\ 0 & 0 
   \ep$$
   with $d_i \in K- \{0\}$ for every $i$. 
   Define $\gamma =diag(d_1, d_1, d_2, d_2, \ldots, d_{\frac{r}{2}}, d_{\frac{r}{2}})  \overline{J}_{r} $. 
 
 Let $$V'= {}^t H V H$$ and $$S'=  {}^t H S H =  G+V'.$$ Obviously $S'  \in   {\cal A}_{ant}^K( n; s,r)  $; moreover, $ \dim(S')= \dim (S)$, so, to prove that $\dim (S) \leq  (n-1) \frac{r}{2} - \frac{s^2}{4} $,
  it is sufficient to prove that $\dim (S') \leq  (n-1) \frac{r}{2} - \frac{s^2}{4} $. We rename $S'$ by $S$ and $V'$ by $V$.
%Suppose for simplicity $d_1,\ldots , d_{\frac{r}{2}} =1$, i.e.  $S= G+V $. In the other cases  we can argue analogously.

Let $$P= \{C   \in  K_a^{n \times n} | \; C_{i,j}=0 \; \mbox{\rm if } i > r \; \mbox{\rm and } j  >r \}.$$

First let us prove that 
\begin{equation} \label{P}
 V\subseteq P
. \end{equation}
Let   $v \in V$. Let  $i \in \{r+1, \ldots, n \} $ and 
 $j \in \{r+1, \ldots, n \} $. We have that 
$ \det \left( (G + t v )^{(1,\ldots, r, j)}_{(1,\ldots, r, i)} \right)$ is a polynomial in $t$ with coefficient of the term of degree $1$ equal to $
d_1^2 \ldots d_{\frac{r}{2}}^2 
v_{i,j} $ and degree at most $r+1$;
since $|K| \geq r+2$ we must have $v_{i,j}=0$ (otherwise 
  there would exist $t$ such that 
$ \det \left( (G + t v )^{(1,\ldots, r, j)}_{(1,\ldots, r, i)} \right)  \neq 0$, which is contrary to our assumption on $S$).  
 Hence we  have proved (\ref{P}).

Let $$Q=  \{C   \in P | \;    C_{i,j}=0  \; \mbox{\rm if } \;i \leq r \; \mbox{\rm and }
j  \leq r  \}.$$
 
 Let $$\pi : P \longrightarrow Q$$ be the map 
 $$ \bp  A & B \\ -{}^t \! B & 0   \ep  \longmapsto  \bp  0 & B \\ -{}^t B & 0   \ep  ,  
$$ 
where $A $ is $r \times r$.
 
 For any $i, j \in \{1, \ldots , n-r\}$,
 let $$\pi_{i,j} : P \longrightarrow K^{2r}$$ be the map 
 $$ \bp  A & B \\ -{}^t B & 0  \ep  \longmapsto  \bp  B^{(i)} \\ B^{(j)}  \ep  ,  
$$ 
where $A $ is $r \times r$.

Obviously
$\det \bp \gamma + h A    & h B^{(j)}  \\  - h({}^t  B)_{(i)}  & 0 \ep $ must be $0$  for any 
$ \bp  A & B \\ -{}^t B & 0  \ep  \in V $ and for any $h \in K$; moreover
 $\det \bp \gamma+ h A    & h B^{(j)}  \\  - h( {}^t  B)_{(i)}  & 0   \ep $  
 is a polynomial in $h$ of degree less than or equal to $r+1$ and term of degree $2$ equal to 
 \begin{equation} \label{quad} d^2_1 \ldots d^2_{\frac{r}{2}}\sum_{l=1,\ldots, r} \frac{1}{d_l}( 
b_{2l-1,i} b_{2l, j} -b_{2l-1, j} b_{2l ,i}) , 
\end{equation}   (in fact the term of degree $2$  is equal to $\det \bp \gamma    & h B^{(j)}  \\  - h({}^t  B)_{(i)}  & 0 \ep$, which can be  calculated by Schur's Lemma, see Lemma \ref{Schur}). Therefore, since $|K| \geq r+2$, we must have that,  for any $i, j \in \{ 1, \ldots, n-r\}$, every element of the subspace $\pi_{i,j} (V)= \pi_{i,j} (\pi (V)) $ in $K^{2r}$ must be isotropic with respect to the  nondegenerate quadratic form given by (\ref{quad}). Hence 
\begin{equation} \label{dim}
\dim (\pi_{i,j} (\pi (V))) \leq r.
\end{equation}
If $n-r $ is even consider the projections
$\pi_{1,2}, \pi_{3,4}, \ldots, \pi_{n-r-1, n-r}$.
If $n-r $ is odd consider the projections
$\pi_{1,2}, \pi_{1,3}, \pi_{2,3}, \pi_{4,5}, \ldots, \pi_{n-r-1, n-r}$.

By Lemma \ref{lemma3} and from (\ref{dim})
we get that $$\dim(\pi(V)) \leq \frac{r}{2} (n-r).$$
Hence there exists a  $  \frac{r}{2} (n-r)$-dimensional
vector subspace $Z $ in $Q$
such that 
 \begin{equation} \label{Z}
 \pi(V) \cap Z =\{0\}.
 \end{equation}

Let $$W =  \{C   \in K_a^{n \times n} | \;    C_{i,j}=0  \; \mbox{\rm if } \;i \geq r+1 \; \mbox{\rm or }
j  \geq r+1  \}$$
and let $\tilde{W}$ be a subspace of $W$ such that 
$$ W = (W \cap V) \oplus \tilde{W}$$

We state that
\begin{equation}\label{str} V \cap (Z \oplus \tilde{W})=\{0\}.
\end{equation} 
In fact:
let $ \bp  A & 0 \\ 0 & 0  \ep  \in \tilde{W} $ and 
 $ \bp  0 & B \\ -{}^t B & 0 & \ep  \in Z$ such that 
$ \bp  A & B  \\ -{}^t B & 0  \ep \in V$, so 
$ \bp  A & B  \\ -{}^t B & 0  \ep \in V \cap    (Z \oplus \tilde{W}) $;
therefore  $$ \bp  0 & B \\ -{}^t B & 0  \ep = \pi  \bp  A & B  \\ -{}^t B & 0  \ep  \in \pi(V) \cap Z =\{0\},$$ so $B=0$; hence $
 \bp  A & 0  \\ 0 & 0  \ep \in V \cap W \cap \tilde{W}=\{0\}$, so $A=0$.

From (\ref{P}) and (\ref{str}) we get $$ \begin{array}{l}
\dim(V) \leq \dim(P) - \dim(Z) - \dim(\tilde{W}) = \\ = \dim(P) - \dim(Z) - \dim(W) + \dim(W\cap V)
 \\ = 
\frac{n(n-1)}{2} - \frac{(n-r)(n-r-1)}{2} -\frac{r (n-r)}{2} -  \frac{r (r-1)}{2}  + \dim(W\cap V)
 \leq \\ \stackrel{\ast}{\leq} 
\frac{n(n-1)}{2} - \frac{(n-r)(n-r-1)}{2} -\frac{r (n-r)}{2}   -\frac{r (r-1)}{2} +\frac{r(r-1)}{2} -
\frac{s^2}{4}  
 = \\ = \frac{r}{2} (n -1) - \frac{s^2}{4} ,
\end{array}
$$
where the inequality $(\ast) $ holds by what follows:
let $$U= \left\{ A \in  K_a^{r \times r } |\;  \bp A & 0 \\ 0 & 0  \ep \in W \cap V\right\}$$
  $$\dim (W \cap V) = \dim(U) = \dim (\gamma + U)  \leq  
\frac{r(r-1)}{2} -
\frac{s^2}{4}  ,$$ where the inequality holds
 by Theorem \ref{dSPant2}.

\end{proof}

To prove Theorem \ref{echel}, we need the following proposition.

\begin{prop} \label{preechel} Let $m,n \in \N -\{0\}$ and 
 $K$ be a field. 
Let $S$ be an affine subspace in $K^{m \times n} $ such that $A $ is in row echelon form for every $A \in S$. 
Let $$Z(S,i)= \{j \in \{1, \ldots, n\} | \; A_{i,j} \neq 0 \;  \mbox{\rm for some }  A \in S  \}  \;\;\; \forall i=1,\ldots, n, $$
$$P:= \{ i \in \{1, \ldots, m\} |\; Z(S,i) \neq \emptyset     \}$$  and 
$$j_i = \min Z(S,i) \;\;\; \forall i \in P.$$
Let $r =\max  \{rk(A)| \;  A \in S \}$. Then $P=\{1, \ldots, r\}$.
 Moreover, if $|K| \geq r+1 $,
there exists $A \in S$ such that $A_{i, j_i} \neq 0 $ for every $i \in P$. 
\end{prop}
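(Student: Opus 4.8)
The plan is to handle the two assertions separately: the equality $P=\{1,\ldots,r\}$ rests purely on the combinatorics of row echelon form, while the existence of the good matrix $A$ reduces to a non-vanishing statement for a product of affine functions.

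For the equality $P=\{1,\ldots,r\}$, the key structural fact I would use is that for a matrix in row echelon form the rank equals the number of nonzero rows and every zero row lies below every nonzero row. Consequently, for any $A\in S$, if $A$ has a nonzero entry in row $i$, then rows $1,\ldots,i$ of $A$ are all nonzero. This shows at once that $P$ has the form $\{1,\ldots,\ell\}$ for some $\ell$: if $i\in P$, a single witness matrix with a nonzero entry in row $i$ has rows $1,\ldots,i$ nonzero, so $1,\ldots,i\in P$. To identify $\ell$ with $r$ I would prove both inequalities. Choosing $A^{*}\in S$ with $rk(A^{*})=r$ gives a matrix whose nonzero rows are exactly $1,\ldots,r$, whence $\{1,\ldots,r\}\subseteq P$ and $\ell\ge r$. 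Conversely, if $\ell\ge r+1$, a witness matrix for $\ell\in P$ would have rows $1,\ldots,\ell$ nonzero and hence rank at least $r+1$, contradicting the maximality of $r$. Thus $\ell=r$.

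For the second assertion I would reformulate everything in terms of affine functions. Write $S=G+V$, fix a basis $v_{1},\ldots,v_{d}$ of the direction space $V$, and coordinatize $A=G+\sum_{l}x_{l}v_{l}$. For each $i\in P=\{1,\ldots,r\}$, the entry $f_{i}(A):=A_{i,j_{i}}$ is then a polynomial of total degree at most $1$ in $x_{1},\ldots,x_{d}$. By the very definition $j_{i}=\min Z(S,i)$, some matrix of $S$ has a nonzero $(i,j_{i})$-entry, so each $f_{i}$ is a \emph{nonzero} polynomial. Setting $F:=\prod_{i=1}^{r}f_{i}$, which is nonzero (a product of nonzero polynomials over a field) and of total degree at most $r$, the claim to prove is exactly that $F$ does not vanish identically on $K^{d}$.

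The main obstacle is precisely this non-vanishing over a possibly finite field, and it is here that the hypothesis $|K|\ge r+1$ is needed: over an infinite field the conclusion would be automatic. In general I would invoke the standard fact that a nonzero polynomial in $K[x_{1},\ldots,x_{d}]$ of total degree $\delta$ has a non-root in $K^{d}$ as soon as $|K|\ge \delta+1$, proved by induction on $d$ by peeling off the top power of one variable and reducing to the one-variable case, where a nonzero polynomial of degree at most $\delta$ has at most $\delta$ roots while $K$ has at least $\delta+1$ elements. Applying this with $\delta=r$ produces a point of $K^{d}$, hence a matrix $A\in S$, at which $F\neq 0$, that is $A_{i,j_{i}}\neq 0$ for every $i\in P$, which is the desired conclusion.
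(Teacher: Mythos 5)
Your proof is correct, and while the first assertion is handled essentially as in the paper (both arguments show $P=\{1,\ldots,\ell\}$ from the echelon structure, get $\ell\ge r$ from a rank-$r$ matrix, and get $\ell\le r$ because a witness for row $\ell$ would have rank at least $\ell$), your proof of the second assertion takes a genuinely different route. The paper proceeds by induction on $k$: given $A\in S$ with $A_{i,j_i}\neq 0$ for $i\le k$ and a witness $A'$ with $A'_{k+1,j_{k+1}}\neq 0$, it considers the line $\lambda A+(1-\lambda)A'$ and picks $\lambda$ avoiding at most $k+1\le r$ forbidden values, which exists since $|K|\ge r+1$ --- a purely one-dimensional pigeonhole repeated row by row. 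You instead coordinatize $S$ once and for all, observe that each $f_i(x)=A_{i,j_i}$ is a nonzero polynomial of degree at most $1$ (nonzero as a polynomial because it is affine and nonzero at the witness point --- the one subtlety over finite fields, which you handle correctly), form the product $F=\prod_{i=1}^r f_i$, nonzero of total degree at most $r$ since $K[x_1,\ldots,x_d]$ is an integral domain, and invoke the standard Schwartz--Zippel-type fact that a nonzero polynomial of total degree $\delta$ has a non-root in $K^d$ whenever $|K|\ge\delta+1$. Both arguments consume the hypothesis $|K|\ge r+1$ in exactly the same quantitative way; indeed the paper's inductive step can be seen as the restriction of (a partial product of) your $F$ to a line. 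What the paper's version buys is complete self-containedness --- only univariate degree-$1$ conditions in $\lambda$, no auxiliary lemma; what yours buys is a shorter, one-shot argument that treats all $r$ conditions simultaneously and generalizes immediately to finding a common non-vanishing point for any family of at most $r$ nonzero affine functionals on an affine space. No gaps: the degenerate case $\dim V=0$ is trivially covered (then $F$ is a nonzero constant), and your induction-on-$d$ proof sketch of the non-vanishing lemma is the standard correct one.
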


\begin{proof} We can suppose that $S$ contains a nonzero matrix.
Obviously $P$ is a set of the kind $\{ 1,\ldots, l\}$ for some $l \in \{1, \ldots,m\}$.
Since in $S$ there exists a matrix of rank $r$, necessarily $l \geq r$. Moreover,  there  exists a matrix $A$ in $S$ such that $A_{(l)} \neq 0$, but $A$ is in row echelon form, so if $l$ were greater than $r$, we would have $rk(A)\geq l > r$, which is absurd; hence $l=r$. Then $P=\{1, \ldots, r\}$.

We prove by induction on $k$ that there exists $A \in S$ such that 
$A_{i, j_i} \neq 0 $ for $i=1, \ldots , k$ for $k \leq r$. 
For $k=1$ the statement is obvious.

Let us us prove the induction step $k \Longrightarrow k+1$. Let $A \in S$ such that $A_{i, j_i} \neq 0 $ for $i=1, \ldots , k$. 
If $A_{k+1, j_{k+1}} \neq 0 $, there is nothing to prove. 
Suppose $A_{k+1, j_{k+1}} = 0 $; let $A' \in S$ be such that 
$A'_{k+1, j_{k+1}} \neq 0 $. We search for $\lambda \in K$ such that, for $i=1, \ldots, k+1$, $$  (\lambda A + (1- \lambda) A')_{i, j_i} \neq 0 ,$$
 that is 
$$  \lambda ( A_{i, j_i} -  A'_{i, j_i})   \neq  -  A'_{i, j_i} ;$$
 observe that

\begin{itemize}

\item for $i=1, \ldots, k$, if $A_{i, j_i} =  A'_{i, j_i}$, then the inequality is verified for any $\lambda$ because $A'_{i, j_i} =  A_{i, j_i} \neq 0$, while if $A_{i, j_i} \neq A'_{i, j_i}$, then the inequality is obviously equivalent to 
$$  \lambda   \neq  - \frac{  A'_{i, j_i}}{ A_{i, j_i} -  A'_{i, j_i}} ;$$
\item  for $i=k+1$ the inequality is obviously equivalent to $ \lambda \neq 1$.

\end{itemize}

Since $|K| \geq r+1$, we can find $\lambda $ as requested.

The matrix $\lambda A + (1- \lambda) A'$ satisfies the conditions we wanted.
\end{proof}

Now we are ready to prove Theorem \ref{echel}.

\begin{proof}[Proof of Theorem   \ref{echel}]

First consider  the case \underline{$r=s$}.

To prove the inequality 
$ a^K_{ech}(m \times n; r, r) \geq 
r n - \frac{r(r+1)}{2} $,
 consider the affine subspace $$ \left\{ A \in K^{m \times n} |\; 
\begin{array}{l}
A_{i,i}= 1\;  \mbox{\rm for }  i=1,\ldots, r ,\; \\ 
A_{i,j}=0  \;  \mbox{\rm for }  i=r+1,\ldots, m, \; j= 1, \ldots ,n\; \\
A_{i,j}=0  \;  \mbox{\rm for }  i> j\;
\end{array} 
\right\}$$  
It is in $ {\cal A}^K_{ech}(m \times n; r,r)$  and its dimension is $ r n - \frac{r(r+1)}{2}$.

Now let us prove the other inequality. 
Let $S \in {\cal A}^K_{ech}(m \times n; r, r)$.
Let $V$ be the direction of $S$.
Let 
$$P :=\{i \in \{1, \ldots, n\}| \; Z(S,i) \neq \emptyset\}, \hspace*{1cm}
P' :=\{i \in \{1, \ldots, n\}| \; Z(S,i) =\emptyset\},$$
where we use the same  notation as in Proposition 
\ref{preechel}.
By Proposition \ref{preechel} we have that
 $P=\{ 1,\ldots, r\}$ and there exists $H \in S$ such that $H_{i, j_i} \neq 0 $ for every $i  \in P$. 

By simultaneous elementary  column operations on every element of $S$, precisely by operations of the kind ``to add a multiple of a column to a following column'',   we can suppose that the only nonzero entries of $H$ are the pivots. Observe that a matrix in row echelon form remains in row echelon form if we add a multiple of a column to a following column, so the elements of $S$ are still in row echelon form.
  
Let $$L =\{(i,j) \in \{1, \ldots, m\} \times \{1, \ldots, n\}| \; H_{i,j} \neq 0\}=\{(1, j_1), \ldots, (r,j_r)\}.$$
Let
$$W = \left\{ A \in K^{m \times n} |\; 
\begin{array}{ll}
A_{i,j}=0 & \mbox{\rm if }  i  =r+1, \ldots ,m \\
A_{i,j}=0 & \mbox{\rm if } i =1,\ldots, r \; \mbox{\rm and } j < j_i 
\end{array}
\right\}.$$
Obviously  $$\dim (W) \leq rn - \frac{r(r-1)}{2}.$$

Let $$D=\{A \in K^{m \times n} | \; A_{i,j} = 0  \; \forall (i,j) \not\in L\}.$$

We have that $$ V \cap D = \{0\}  ,$$ otherwise 
there would exist in $S$ an element of rank 
less than $r$.

So by Grassmann's formula we get:
$$ 0= \dim(V\cap D) = \dim(V) + \dim(D) -\dim(V+D), $$
hence $$ \begin{array}{l} \dim(V)  
= -\dim(D)+ \dim(V + D)=  \\ = -r+ \dim(V +D)\leq -r +\dim(W) \leq rn - \frac{r(r-1)}{2}- r = rn - \frac{r(r+1)}{2}.
\end{array}$$

Consider now the \underline{case $s <r$.}

To prove the inequality 
$a^K_{ech}(m \times n;s, r) \geq sn - \frac{s(s+1)}{2}+ n-s$, consider the affine subspace $$ \left\{ A \in K^{m \times n} |\; 
\begin{array}{l}
A_{i,i}= 1\;  \mbox{\rm for }  i=1,\ldots, s ,\; \\ 
A_{i,j}=0  \;  \mbox{\rm for }  i=r+1,\ldots, m, \; j= 1, \ldots ,n\; \\
A_{i,j}=0  \;  \mbox{\rm for }  i> j\; \\
A_{s+1, j}= A_{s+1+1, j+1}= \ldots= A_{s+1+r-s, j+r-s}  \;  \mbox{\rm for }   j=s+1, \ldots, n-1\; 
\end{array} 
\right\}$$  
It is in $ {\cal A}^K_{ech}(m \times n; s,r)$  and its dimension is $sn - \frac{s(s+1)}{2}+ n-s$.
  
Now let us  prove the inequality
$a^K_{ech}(m \times n;s, r)    \leq 
r n - \frac{r(r+1)}{2}+1 $. 

Let $S \in {\cal A}^K_{ech}(m \times n; s, r)$, let $V$ be the direction of $S$,  $P$, $P'$  and $W$ be defined as in the case $r=s$.
By Proposition \ref{preechel} we have that
 $P=\{ 1,\ldots, r\}$ and there exists $H \in S$ such that $H_{i, j_i} \neq 0 $ for every $i  \in P$. 
As in the case $r=s$  we can suppose that the only nonzero entries of $H$ are the pivots. 

Define $$U= \langle 
 E_{1,j_1}, \ldots, E_{r-1,j_{r-1}} \rangle.$$
Obviously $\dim(U)=r-1$ and $V \cap U=\{0\}$, in fact: if there existed $ \lambda_1, \ldots , \lambda_{r-1}$ not all zero, for instance such that $ \lambda_{\overline{\i}} \neq 0$, such that 
$$ \lambda_1 E_{1,j_1} + \ldots + \lambda_{r-1} E_{r-1.j_{r-1}} \in V $$ 
then $$ H - \frac{H_{\overline{i}, j_{\overline{i}}}  }{\lambda_{\overline{\i}}} (\lambda_1 E_{1,j_1} + \ldots + \lambda_{r-1} E_{r-1,j_{r-1}} )$$ 
  would be an element of $S$ not in row echelon form.

So by Grassmann's formula we get:
$$ 0= \dim(V\cap U) = \dim(V) + \dim(U) -\dim(V+U), $$
hence $$ \begin{array}{l} \dim(V)  
= -\dim(U)+ \dim(V + U)=  \\ = -r+1+ \dim(V +U)\leq -r+1 +\dim(W) \leq rn - \frac{r(r-1)}{2}- r+1 = rn - \frac{r(r+1)}{2 }+1.
\end{array}$$

\end{proof}

\begin{rem}
Let $r,s, m , n \in \N  $ with $s \leq r \leq m \leq n$ and let $K$ be  a field.
It is natural to wonder what we can say about 
$ a^K(m \times n; s,r)$ if we have not the assumptions 
on $K$ we have in Theorem \ref{mxn}.

It is easy to see that, without the assumptions on the field, the statement of Theorem~\ref{mxn} does not hold any more: consider for instance $K = \Z/3$,
$m=n=3$, $r=s=2$ and the affine subspace
$$ S=\left\{  \bp  a & b & c \\ 0 & a+1 & d \\ 0 & 0 & a+2 \ep | \; a,b,c,d \in \Z/3 \right\} ;$$
every element of $S$ has rank $2$,
the dimension of $S$ is $4$, while $rn - \binom{s+1}{2}= 3$.

Obviously if the cardinality of $K$ is less than $r+2$
and $p(x)$ is a polynomial on $K$ of degree $r+1$, 
 we cannot say that in $K$ there is an element which is not a root of $p$ and it seems difficult to bypass this argument we use in the proof of the theorems of this paper. Anyway, we can easily prove that 
$$ a^K(m \times n; s,r) \leq  
m n  - \binom{s+1}{2},$$ 
and, if the characteristic of $K$ is different from $2$, we can prove also that 
$$ a^K(m \times n; s,r) \leq  
m n  - (m-r)r.$$ The proof is the following.  

\smallskip

Let $S \in {\cal A}^K(m \times n; s,r)$. 
We can suppose   $ S=  G+V,$ where
$G =\bp 
 I_{r} & 0  \\ 0 & 0 
   \ep   \in  K^{m \times n}$ and 
 $V$ is a vector subspace of  $K^{m \times n}$.
Let 
\begin{equation} \label{W}
W =  \{C   \in  K^{m \times n}   | \;    C_{i,j}=0  \; \mbox{\rm if } \;i \geq r+1 \; \mbox{\rm or }
j  \geq r+1  \}
\end{equation}
and let $\tilde{W}$ be a subspace of $W$ such that 
$$ W = (W \cap V) \oplus \tilde{W};$$
hence $V \cap \tilde{W}=\{0\}$. So 
$$ \dim (V) \leq mn - \dim ( \tilde{W})= mn - \dim (W) + \dim (V\cap W) \leq mn - r^2 + r^2 - \binom{s+1}{2},$$ 
where the last inequality holds by Theorem \ref{dSP>=} as in the proof of  Theorem \ref{mxn}.

Moreover let 
\begin{equation} \label{U}
U=  \{C   \in  K^{m \times n}   | \;    C_{i,j}=0  \; \mbox{\rm if } \;i, j \leq r \; \mbox{\rm or }
i,j  \geq r+1  \}
\end{equation}
and let $\tilde{U}$ be a subspace of $W$ such that 
$$ U = (U \cap V) \oplus \tilde{U};$$
hence $V \cap \tilde{U}=\{0\}$. 
 For any $i , j \in \{1, \ldots , m-r\}$,
 let $$\pi_{i,j} : U  \longrightarrow K^{2r}$$ be the map 
 $$ \bp  0_{r \times r} & B  \\ C & 0  \ep  \longmapsto  \bp  B^{(j)} \\ {}^t (C_{(i)})  \ep  ,  
$$ 
and, if $n >m $,  let $$p :  U \longrightarrow K^{r \times (n-m) } $$ be the map 
 $$ \bp  0_{r \times r}  & B \\ C & 0  \ep  \longmapsto  B^{(r+1, \ldots, n)}.
$$ 
 
Since for every  
 $ \bp  0_{r\times r} & B  \\ C & 0  \ep  \in V \cap U
$ and for any $i , j \in \{1, \ldots , m-r\}$, we have that  $\det \bp I_r    &  B^{(j)}  \\  C_{(i)}  & 0 \ep =0$,
 every element of the subspace $\pi_{i,j} (V \cap U)$ must be isotropic with respect to the nondegenerate quadratic form on $K^{2r}$ defined by $\sum_{i=1, \ldots, r}
 x_i x_{r+i}$, hence $\dim ( \pi_{i,j} (V\cap U )) \leq r $.
 
 Since
 $$ V \cap U \subset \pi_{1,1} (V \cap U) + \ldots +  \pi_{m-r,m-r} (V \cap U) + p(V \cap U), $$ 
we get 
$$ \dim (V \cap U) \leq   \underbrace{r + \ldots +r}_{m-r} + (n-m)r = (m-r)r+ (n-m)r = (n-r)r.$$

Hence  
$$ \dim (V) \leq mn  - \dim ( \tilde{U})= mn - \dim (U) + \dim (V\cap U) \leq m n  - (m-r)r. $$

\end{rem}

\vspace*{0.7cm}
{\bf Note.}
In the paper C. De Seguins Pazzis ``On affine spaces of rectangular matrices with constant rank''
 	arXiv:2405.02689  appeared on arXiv in the same period as this paper, the author obtains a result analogous to  Theorem \ref{mxn} in the case $r=s$; the two results have been obtained independently.
 	
\vspace*{0.7cm}

{\bf Acknowledgments.}
This work was supported by the National Group for Algebraic and Geometric Structures, and their  Applications (GNSAGA-INdAM).

The author wishes to thank the anonymous referee for his/her comments, which helped to improve the paper.

{\small }

\end{document}